\providecommand{\keywords}[1]{\textbf{\textit{Keywords: }} #1}
\providecommand{\classification}[1]{\textit{2020 Mathematics Subject Classification: } #1}
\def\BState{\State\hskip-\ALG@thistlm}
\newtheoremstyle{Definition}
  {0.2cm}                   
  {0.2cm}                   
  {\normalfont}           
  {}                      
  {\bfseries}  						
  {.}                     
  { }              				
  {}
\newtheoremstyle{Theorem}
  {0.2cm}                   
  {0.2cm}                   
  {\itshape}           		
  {}                      
  {\bfseries}  						
  {.}                     
  { }              				
  {}
\theoremstyle{Theorem}
	\newtheorem{cor}{Corollary}
	\newtheorem{lem}{Lemma}
	\newtheorem{thm}{Theorem}
	\newtheorem{ass}{Assumption}
	\newtheorem{scheme}{Scheme}
\theoremstyle{Definition}
	\newtheorem{example}{Example}
	\newtheorem{rem}{Remark}
	\newtheorem{defn}{Definition}
\newcommand{\upd}{\mathrm{d}}
\begin{document}




\title{Simulating Continuous-Time Autoregressive Moving Average Processes Driven By $p$-Tempered $\alpha$-Stable L\'evy Processes} 
\author[a]{Till Massing\thanks{Faculty of Economics, University of
Duisburg-Essen, Universit{\"{a}}tsstr.~12, 45117 Essen, Germany.\\E-Mail:
till.massing@uni-due.de}}

\maketitle

\begin{abstract}
We discuss simulation schemes for continuous-time autoregressive moving average (CARMA) processes driven by tempered stable L\'evy noises. CARMA processes are the continuous-time analogue of ARMA processes as well as a generalization of Ornstein-Uhlenbeck processes. However, unlike Ornstein-Uhlenbeck processes with a tempered stable driver (see, e.g., \cite{qu2021}) exact transition probabilities for higher order CARMA processes are not explicitly given. Therefore, we follow the sample path generation method of \cite{Kawai2017} and approximate the driving tempered stable L\'evy process by a truncated series representations. We derive a result of a series representation for $p$-tempered $\alpha$-stable distributions extending \cite{Rosinski2007}. We prove approximation error bounds and conduct Monte Carlo experiments to illustrate the usefulness of the approach.
\end{abstract}

\keywords{Tempered stable distributions, CARMA processes, L\'evy processes, $p$-tempering, simulation, series representation}\\
\classification{60E07, 60G10, 60G51, 62M10}
\microtypesetup{activate=true}

\section{Introduction}\label{sec:intro}

We discuss simulation schemes for continuous-time autoregressive moving average (CARMA) processes driven by tempered $\alpha$-stable L\'evy noises.
CARMA processes are the continuous-time analogue of discrete-time ARMA processes and extend the concept of Ornstein-Uhlenbeck (OU) processes. Ornstein-Uhlenbeck processes, also referred to as CAR(1), represent the continuous-time analogue of autoregressive processes with a lag order of 1. They find extensive use in financial literature, see, e.g., \cite{barndorff2001non}. While originally introduced for Gaussian CARMA processes driven by Brownian motion \cite[]{Brockwell1994}, CARMA processes have been generalized to be driven by arbitrary L\'evy processes \cite[]{Brockwell2001}. Non-negative L\'evy-driven CARMA processes have attracted interest, particularly in the context of modeling stochastic volatility \cite[]{Brockwell2011}. While non-negative OU processes are often employed for this purpose, CARMA processes with higher lag orders offer a more flexible autocorrelation function.

Simulation of L\'evy-driven CARMA processes requires the formulation of both a scheme for simulating trajectories of the background-driving L\'evy process (BDLP) and a strategy for subsequently sampling a path of the stationary CARMA process. Our primary interest lies in achieving high-frequency sampling to obtain a quasi-continuous sample path. In \cite{todorov} and \cite{Kawai2017}, the authors propose series representations of L\'evy processes as fundamental building blocks. Series representations for L\'evy processes trace back to \cite{Rosinski2001} and provide a method to sample a path with an arbitrary number of jumps. In \cite{Kawai2017} a simple scheme for sampling CARMA paths is presented which is based on a series representation for the BDLP, an approach that we follow in this paper.

We here focus on the special case of CARMA models based on tempered stable distributions which have a long history throughout probability theory and applications in finance. These distributions emerge through the tempering of the L\'evy measure of stable distributions with a suitable tempering function. The concept of tempered stable distributions was first introduced by \cite{Koponen1995}, who termed the associated L\'evy process a ``smoothly truncated L\'evy flight'', representing a generalization of Tweedie distributions \cite[]{Tweedie1984}. Subsequently, tempered stable distributions have been generalized in several directions, primarily by expanding the class of admissible tempering functions. The seminal paper \cite{Rosinski2007} presents a general framework for tempered stable distributions and derives a series representation which we here extend. The CGMY distribution \cite[]{CGMY2002} is a well-known special case of the classical tempered stable distribution that was introduced to model log-returns of stock prices. Tempered stable distributions have frequently been used for financial applications \cite[]{Kim2008,rachev2011financial,Fallahgoul2019}.

In this paper, we treat so-called $p$-tempered $\alpha$-stable distributions, as introduced by \cite{Grabchak2012}, where $p>0$ and $\alpha\in(0,2)$. The parameter $p$ governs the extent of tempering, while $\alpha$ represents the stability index of the associated stable distribution. This class of distributions contains, e.g., the standard class of \cite{Rosinski2007} for $p=1$ and the class of \cite{Bianchi2011} for $p=2$. See also \cite{Grabchak2016book} for a survey. $p$-tempered $\alpha$-stable distributions are potential tools for various financial applications. For example, \cite{Sabino2022exact} discussed the pricing of a strip of daily European options in energy or commodity markets and the modeling of future markets. Modeling flight length in mobility models is another domain of application as discussed by \cite{Grabchak2016book}.

\cite{Grabchak2019} develops a fast and easy-to-use rejection sampling algorithm for $p$-tempered $\alpha$-stable distributions. However, there is a limitation that we aim to address in this paper. The rejection algorithm requires $0<\alpha<p$, which leads to the exclusion of important cases. For example, it does not cover instances where $\alpha\in(1,2)$ for $p=1$, which is the standard case presented by \cite{Rosinski2007}. Therefore, we opt for a different approach by deriving a series representation for $p$-tempered $\alpha$-stable distributions that also allows $\alpha>p$. There are a few other series representations available derived by \cite{Rosinski2010} which we briefly review in Section \ref{sec:distributions}.

As previously mentioned, CARMA processes are a natural extension of OU processes. In the literature, simulation algorithms for tempered stable OU processes have been extensively studied. Notably, there exist two distinct versions of these processes with a subtle yet crucial difference. Firstly, TS-OU processes are OU processes having a tempered stable distribution as stationary marginal distribution. Secondly, OU-TS processes are OU processes driven by a tempered stable BDLP. Both variants have been examined in, among others, \cite{Zhang2009}, \cite{KawaiMasuda2011} and \cite{qu2021}. These studies have derived transition probabilities and exact sampling schemes for tempered stable subordinators. Various extensions have since been established. For instance, \cite{Sabino2022OU} and \cite{Grabchak2023} have expanded upon these methodologies to simulate TS-OU and OU-TS processes using classical tempered stable distributions and $p$-tempered stable distributions, respectively. However, the approach of explicitly deriving the transition probabilities is not transferable to general CARMA processes of higher lag order due
to the more complicated structure than OU processes (sum of exponential functions instead of one exponential function), see also \cite{Kawai2017}. Furthermore, considering a TS-CARMA process -- a CARMA process having a stationary TS distribution -- is computationally infeasible. Therefore, we focus on the sampling of CARMA processes driven by a tempered stable L\'evy process, called CARMA-TS processes, specifically utilizing $p$-tempered stable distributions.

The contributions of this paper are the following. Firstly, we establish a novel series representation for $p$-tempered $\alpha$-stable distributions by generalizing the findings of \cite{Rosinski2007}. Based on this, we then derive a series representation and hence a simulation scheme for CARMA processes driven by $p$-tempered $\alpha$-stable L\'evy processes and we study approximation errors.

The paper is organized as follows. In Section \ref{sec:distributions} we define $p$-tempered stable distributions and highlight some important subclasses. Section \ref{sec:carma} presents the basics of CARMA processes and some useful properties. In Section \ref{subsec:ptempseries} we state and prove the main theoretical result of a series representation for $p$-tempered stable L\'evy processes. In Section \ref{subsec:ptempCARMA} we derive the simulation scheme for CARMA-TS processes. Monte Carlo experiments in Section \ref{sec:MCstudy} illustrate the usefulness of the proposed simulation scheme and show its limitations. Section \ref{sec:conclusion} concludes. The proofs are to be found in Appendix \ref{app:proofs}.

\section{Tempered stable distributions}\label{sec:distributions}


In this section, we briefly review tempered stable distributions as in \cite{Rosinski2007}. We particularly emphasize the extension of these distributions known as $p$-tempered stable distributions introduced by \cite{Grabchak2012} and reviewed in \cite{Grabchak2016book}. At the end of the section, we provide relevant examples. To start, we recall some relevant results from probability theory. An $\mathbb{R^d}$-valued process $\{L_t\}_{t\in\mathbb{R_+}}$ is called a \emph{L\'{e}vy process} if $X_0=0$ a.s., it has independent and stationary increments, it is stochastically continuous and the path function is c\`{a}dl\`{a}g a.s. A \emph{subordinator} is a one-dimensional, (a.s.)~non-decreasing L\'{e}vy process. We express L\'{e}vy process using the L\'evy-Khintchine representation, i.e., a probability distribution $\mu$ on $\mathbb{R}^d$ of a random variable $X$ is infinitely divisible if and only if there exists a unique triple (called L\'evy triple) $(\gamma,A,M)$, where $M$ is a measure on $\mathbb{R}^d$ called \emph{L\'evy measure}.

\begin{defn}\label{defn:ptemp}
Let $\alpha\in(0,2)$ and $p>0$. An infinitely divisible probability measure $\mu$ on $\mathbb{R}^d$ is called $p$-\emph{tempered} $\alpha$-\emph{stable distribution} if it has no Gaussian part and its L\'evy measure has the form
\begin{equation}
\label{eq:Levymeasureptemp}
M(A)=\int_{\mathbb{S}^{d-1}}\int_0^{\infty}\mathds{1}_A(ru)q(r^p,u)r^{-\alpha-1}\upd r \sigma(\upd u), \ \ \ A\in\mathcal{B}(\mathbb{R}^d),
\end{equation}
where $\sigma$ is a finite Borel measure on $\mathbb{S}^{d-1}$ and $q:(0,\infty)\times \mathbb{S}^{d-1}\rightarrow(0,\infty)$ is a Borel function such that, for all $u\in\mathbb{S}^{d-1}$, $q(\cdot,u)$ is \emph{completely monotone} (see \cite{Rosinski2007}) and is called \emph{tempering function}. 
%
 If
\begin{equation}
\label{eq:propertemp}
\lim_{r\downarrow0}q(r,u)=1,
\end{equation}
for each $u\in\mathbb{S}^{d-1}$, then $\mu$ is called a \emph{proper} $p$-\emph{tempered} $\alpha$ \emph{stable distribution}. We call the class of proper $p$-tempered $\alpha$-stable distributions $TS_{\alpha}^p$.
\end{defn}


The tempering function can be represented as
\begin{equation}\label{qQrelation}
q(r^p,u)=\int_0^{\infty}\mathrm{e}^{-r^ps}Q(\upd s|u),
\end{equation}
where $\{Q(\cdot|u)\}_{u\in\mathbb{S}^{d-1}}$ is a measurable family of Borel measures. $Q(\cdot|u)$ is a probability measure if and only if \eqref{eq:propertemp} holds, i.e., $\mu$ is proper.

Proper $p$-tempered $\alpha$-stable distributions $\mu$ are infinitely divisible. Hence, they induce L\'evy processes $L=\{L_t\}_{t\in\mathbb{R}_+}$ s.t.~$L_1\sim \mu$. We call $L$ a $p$-\emph{tempered} $\alpha$-\emph{stable L\'evy process}. In this paper, we focus on CARMA processes driven by proper tempered stable L\'evy processes. These particular processes align with the initial motivation by modifying the tails of stable distributions to make them lighter to, e.g., obtain finite moments.

Next, we introduce some further measures on $\mathbb{R}^d$ which are related to the L\'evy measure $M$. They are needed later in the proofs and for the simulation scheme of the series representation. We define the Borel measure
\begin{equation}
\label{eq:Qmeasure}
Q(A):=\int_{\mathbb{S}^{d-1}}\int_0^{\infty}\mathds{1}_A(ru)Q(\upd s|u)\sigma(\upd u), \ \ \ A\in\mathcal{B}(\mathbb{R}^d).
\end{equation}
Then $Q({0})=0$. We also define the Borel measure
\begin{equation}
\label{eq:Rmeasure}
R(A):=\int_{\mathbb{R}^{d}}\mathds{1}_A\left(\frac{v}{||v||^{1+1/p}}\right)||v||^{\alpha/p}Q(\upd v), \ \ \ A\in\mathcal{B}(\mathbb{R}^d).
\end{equation}
Also, $R({0})=0$. The measure $R$ is called the \emph{Rosi\'nski measure}, see \cite{Rosinski2007}. We have the change of variable formula
\begin{equation}
\label{eq:changeofvariable}
\int_{\mathbb{R}^{d}}F(w)R(\upd w)=\int_{\mathbb{R}^{d}}F\left(\frac{v}{||v||^{1+1/p}}\right)||v||^{\alpha/p}Q(\upd v),
\end{equation} 
for any Borel function $F$ in the sense that if and only if one side exists then so does the other and both are equal. There is a one-to-one relationship between the L\'evy measure and the Rosi\'nski measure, see \cite[]{Grabchak2016book}.
For $p=1$ we have the special case of tempered $\alpha$-stable distributions as discussed in the seminal paper of \cite{Rosinski2007}. The $p$-tempered stable distributions we discuss constitute a specific subset of the generalized tempered stable distributions introduced by \cite{Rosinski2010} but have additional structure and cover various interesting subclasses. Moreover, many theoretical results that are available for tempered stable distributions of \cite{Rosinski2007} have an extended version in \cite{Grabchak2016book} while a general result for the generalized class of \cite{Rosinski2010} lacks. In the following subsection, we introduce some one-dimensional examples which can be found in the literature and are used in the Monte Carlo experiment in Section \ref{sec:MCstudy}.

We briefly review different method of series representations to sample paths of tempered stable distributions. The method we propose in Section \ref{sec:results} for general $p$ is an extension of \cite{Rosinski2007} for $p=1$. There are a few other well-known representations (inverse L\'evy method, rejection method, and thinning method) which are compared in \cite{Imai20114411,Kawai2011}. The inverse method and the rejection method (as a series representation, not to be confused with the rejection sampling of \cite{Grabchak2019}) have been extended by \cite{Rosinski2010} for general tempered stable distributions which include $TS_{\alpha}^p$. We opt for the aforementioned series representation because the inverse method is numerically more challenging, see \cite{Imai:2013:NIL:2489318.2489552}, and the rejection method may have a high number of rejections if the level of truncation is small, see Section \ref{sec:MCstudy} for more details. For further numerical properties of series representations see also \cite{Kawai2021,Yuan2021}.


\begin{example}\label{expl:TSS}
The first special case is the $p$-tempered stable subordinator (pTSS). It is constructed from the stable subordinator which is a non-negative, increasing L\'evy process with $\alpha$-stable marginals. In this case, $\alpha$ needs to be in $(0,1)$. Its L\'evy measure is 
\begin{equation}\label{eq:LevyTSS}
M(\upd z)=\frac{\mathrm{e}^{-(\lambda z)^p}\delta}{z^{1+\alpha}}\mathds{1}_{(0,\infty)}(z)\upd z,
\end{equation}
where $\delta>0$ is a scale parameter and $\lambda>0$ a tempering parameter.
\end{example}

\begin{example}\label{expl:CTS}
One-dimensional classical $p$-tempered stable (pCTS) distributions are defined by their L\'evy measure 
\begin{equation}\label{eq:LevyCTS}
M(\upd z)=\left(\frac{\mathrm{e}^{-(\lambda_+z)^p}\delta_+}{z^{1+\alpha}}\mathds{1}_{(0,\infty)}(z)+\frac{\mathrm{e}^{-(\lambda_-|z|)^p}\delta_-}{|z|^{1+\alpha}}\mathds{1}_{(-\infty,0)}(z)\right)\upd z.
\end{equation}
$\alpha\in(0,2)$ is the stability parameter, $\delta_+,\delta_->0$ are scaling parameters, $\lambda_+,\lambda_->0$ are tempering parameters. The indices $+$ and $-$ refer to the positive and negative tails of the distribution.
\end{example}

\begin{example}\label{expl:GTS}
Third, we discuss the gamma $p$-tempered stable stable distribution (p$\Gamma$TS), see \cite{Terdik2006} and \cite{Grabchak2016book}. For simplicity, we only consider the subordinator case which is one-sided with $\alpha\in(0,1)$. The L\'evy measure is
\begin{equation}
\label{eq:LevyGTS}
M(\upd z)=\left(\frac{z^p}{\lambda}+1\right)^{-\beta/p}z^{-1-\alpha}\mathds{1}_{(0,\infty)}(z)\upd z,
\end{equation}
where $\beta>0,\lambda>0$ are tempering parameters.
\end{example}

\section{CARMA processes}\label{sec:carma}
In this section, we briefly collect facts for general CARMA processes from \cite{Brockwell2001,Brockwell2011,Brockwell2014}.

Let $\bm A\in\mathbb{R}^{\bar{p}\times {\bar{p}}}$, $\bm b\in\mathbb{R}^{\bar{p}}$, $\bm e_{\bar{p}} \in\mathbb{R}^{\bar{p}}$ be defined by
\begin{equation}
\label{eq:Amatrix}
\bm A:=\begin{pmatrix} 0&1&0&\cdots&0\\0&0&1&\cdots&0\\ \vdots&\vdots&\vdots&\ddots&\vdots\\ -a_{\bar{p}}&-a_{{\bar{p}}-1}&-a_{{\bar{p}}-2}&\cdots&-a_1\end{pmatrix},\ \ \ \bm a:=\begin{pmatrix}a_1\\b_2\\\vdots\\a_{{\bar{p}}-1}\\a_{{\bar{p}}}\end{pmatrix},\ \ \ \bm b:=\begin{pmatrix}b_0\\b_1\\\vdots\\b_{{\bar{p}}-2}\\b_{{\bar{p}}-1}\end{pmatrix},\ \ \ \bm e_{\bar{p}}:=\begin{pmatrix}0\\0\\\vdots\\0\\1\end{pmatrix},
\end{equation}
where $a_1,\ldots,a_{\bar{p}},b_0,\ldots, b_{{\bar{p}}-1}\in\mathbb{R}$ such that $b_{\bar{q}}=1$, ${\bar{q}}\le {\bar{p}}-1$ and $b_k=0$ for $k>{\bar{q}}$. Define the autoregressive and the moving average polynomials by
\begin{equation}
\label{eq:armapoly}
a(z):=z^{\bar{p}}+a_1z^{{\bar{p}}-1}+\cdots+a_{\bar{p}},\ \ \ b(z):=b_0+b_1z+\cdots+b_{\bar{q}}z^{\bar{q}},\ \ \ b_{\bar{q}}=1.
\end{equation}
Let $\lambda_1,\ldots,\lambda_{\bar{p}}$ be the eigenvalues of $\bm A$.

We define a L\'evy-driven CARMA$({\bar{p}},{\bar{q}})$ process $\{Y_t\}_{t\in\mathbb{R}}$ in $\mathbb{R}$ by
\begin{equation}
\label{eq:carmaY}
Y_t:=\bm b^{\mathrm{T}}\bm X_t,
\end{equation}
for $t\in\mathbb{R}$, where $\{\bm X_t\}_{t\in\mathbb{R}}$ solves
\begin{equation}
\label{eq:carmaX}
\bm X_t=\mathrm{e}^{\bm A(t-s)}\bm X_s+\int_s^t\mathrm{e}^{\bm A(t-r)}\bm e_{\bar{p}}\upd L_r,\ \ \ s\le t,
\end{equation}
where $\{L_t\}_{t\in\mathbb{R}}$ is a one-dimensional two-sided L\'evy process defined on $\mathbb{R}$.

\begin{ass}\label{ass:carma}
The two polynomials $a(z)$ and $b(z)$ have no common roots and all roots of $a(z)$ are real, negative and distinct.
\end{ass}
Under this assumption the L\'evy-driven CARMA process $\{Y_t\}_{t\in\mathbb{R}}$ is unique and strictly stationary and can be written as 
\begin{equation}
\label{eq:carmaYg}
Y_t=\int_{-\infty}^{\infty}g(t-s)\upd L_s,
\end{equation}
where the function 
\begin{equation}
\label{eq:kernelg}
g(t)=\bm b^{\mathrm{T}}\mathrm{e}^{\bm A t}\bm e_{\bar{p}} \mathds{1}_{[0,\infty)}(t)
\end{equation}
is known as the \emph{kernel} of $\{Y_t\}$.
Moreover, by \cite{Brockwell2011}, we have
\begin{lem}\label{lem:OUdecomp}
Under Assumption \ref{ass:carma}, the CARMA$({\bar{p}},{\bar{q}})$ process can be written as the sum of $\bar{p}$ dependent and possibly complex-valued $CAR(1)$, i.e., Ornstein-Uhlenbeck (OU) processes. That is,
\begin{equation}
\label{eq:OUdecomp}
Y_t=\sum_{k=1}^{\bar{p}}Y_t^{(k)}:=\sum_{k=1}^{\bar{p}}\int_{-\infty}^t\alpha_k\mathrm{e}^{\lambda_k(t-s)}\upd L_s,
\end{equation}
where
\begin{equation}
\label{eq:alphak}
\alpha_k=\frac{b(\lambda_k)}{a'(\lambda_k)}, \ \ \ k=1,\ldots,{\bar{p}},
\end{equation}
and $a'$ denotes the first derivative of $a$.
\end{lem}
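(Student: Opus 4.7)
The plan is to reduce the claim to a purely deterministic statement about the kernel $g$ of the CARMA process, namely the decomposition
\[
g(t) = \sum_{k=1}^{\bar p} \alpha_k\, \mathrm{e}^{\lambda_k t}\, \mathds{1}_{[0,\infty)}(t),
\]
and then substitute into the stationary integral representation \eqref{eq:carmaYg}. Interchanging the finite sum over $k$ with the stochastic integral is routine: each summand is itself a well-defined $L^2$ L\'evy integral (all $\lambda_k$ are negative under Assumption \ref{ass:carma}, so each OU integrand is square integrable on $(-\infty,t]$). Thus the entire content of the lemma is the spectral decomposition of $\bm b^{\mathrm{T}} \mathrm{e}^{\bm A t} \bm e_{\bar p}$.

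Under Assumption \ref{ass:carma} the characteristic polynomial of $\bm A$ in \eqref{eq:Amatrix} equals $(-1)^{\bar p} a(z)$, so $\bm A$ has $\bar p$ distinct real eigenvalues $\lambda_1,\ldots,\lambda_{\bar p}$ and is diagonalizable. A direct check against the companion-type structure of $\bm A$ shows that the right eigenvectors can be taken to be $\bm v_k := (1,\lambda_k,\lambda_k^2,\ldots,\lambda_k^{\bar p-1})^{\mathrm{T}}$, so that the matrix $V$ whose columns are the $\bm v_k$ is a Vandermonde matrix. Writing the rows of $V^{-1}$ as $\bm w_1^{\mathrm{T}},\ldots,\bm w_{\bar p}^{\mathrm{T}}$, the spectral expansion $\mathrm{e}^{\bm A t} = \sum_{k=1}^{\bar p} \mathrm{e}^{\lambda_k t}\,\bm v_k \bm w_k^{\mathrm{T}}$ yields
\[
g(t) = \sum_{k=1}^{\bar p} (\bm b^{\mathrm{T}} \bm v_k)(\bm w_k^{\mathrm{T}} \bm e_{\bar p})\, \mathrm{e}^{\lambda_k t}\, \mathds{1}_{[0,\infty)}(t),
\]
and the task reduces to identifying each pair of scalar coefficients.

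The factor $\bm b^{\mathrm{T}} \bm v_k = \sum_{j=0}^{\bar p-1} b_j \lambda_k^j = b(\lambda_k)$ is immediate from the definitions. The factor $c_k := \bm w_k^{\mathrm{T}} \bm e_{\bar p}$ is the $k$-th entry of the last column of $V^{-1}$, characterised by the linear system $\sum_{k=1}^{\bar p} c_k \lambda_k^{j-1} = \delta_{j,\bar p}$ for $j=1,\ldots,\bar p$. Here I expect the only non-routine step: pairing this system with the coefficients of $q_k(z) := \prod_{j\ne k}(z-\lambda_j)$, a polynomial of degree $\bar p-1$ with leading coefficient $1$, forces $c_k\, q_k(\lambda_k) = 1$, and since $a(z) = \prod_j(z-\lambda_j)$ one has $q_k(\lambda_k) = a'(\lambda_k)$, so $c_k = 1/a'(\lambda_k)$. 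Combining the two factors gives $b(\lambda_k)/a'(\lambda_k) = \alpha_k$ exactly as in \eqref{eq:alphak}, yielding the deterministic decomposition of $g$; substituting back into \eqref{eq:carmaYg} and moving the finite sum outside the stochastic integral produces \eqref{eq:OUdecomp}.
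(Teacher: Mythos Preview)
Your argument is correct: the companion matrix $\bm A$ has eigenvectors $\bm v_k=(1,\lambda_k,\ldots,\lambda_k^{\bar p-1})^{\mathrm{T}}$, the spectral expansion of $\mathrm{e}^{\bm A t}$ yields $g(t)=\sum_k (\bm b^{\mathrm{T}}\bm v_k)(\bm w_k^{\mathrm{T}}\bm e_{\bar p})\mathrm{e}^{\lambda_k t}\mathds{1}_{[0,\infty)}(t)$, and your identification $\bm b^{\mathrm{T}}\bm v_k=b(\lambda_k)$ and $\bm w_k^{\mathrm{T}}\bm e_{\bar p}=1/a'(\lambda_k)$ via the Lagrange/Vandermonde inverse is clean and correct. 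Substituting into \eqref{eq:carmaYg} and pulling out the finite sum is then immediate.

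For comparison, the paper does not supply its own proof of this lemma at all; it simply attributes the statement to \cite{Brockwell2011} (see the sentence preceding the lemma). The argument you give is precisely the standard partial-fraction/spectral decomposition of the CARMA kernel that underlies that reference, so you have in effect reconstructed the cited proof rather than found an alternative route.
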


\begin{defn}\label{defn:CARMATS}
Let $p>0$, $\alpha\in(0,2)$. Let $\{L_t\}_{t\in\mathbb{R}}$ be a $TS_{\alpha}^{p}$ process. Let ${\bar{q}}\le {\bar{p}}-1$ and let $a(z)$ and $b(z)$ be polynomials such that Assumption \ref{ass:carma} is satisfied. We call the process $\{Y_t\}_{t\in\mathbb{R}}$ defined by \eqref{eq:carmaY} and \eqref{eq:carmaX} \emph{CARMA}$({\bar{p}},{\bar{q}})$\emph{-pTS process} (or short \emph{CARMA-pTS process}). For $\bar{p}=1,\bar{q}=0$, $CAR(1)$ processes are called \emph{OU-pTS processes}.
\end{defn}


We end the section with a notational remark highlighting the difference between $p$ (the tempering parameter) and $\bar{p}$ (the AR lag order) and the difference between $\alpha$ (the stability index) and the $\alpha_k$'s from \eqref{eq:alphak}.

\section{Theoretical results}\label{sec:results}
This section is divided into two subsections. In the first, we prove the main theorem for a series representation for $p$-tempered stable L\'evy distributions. In the second, we discuss resulting algorithms and approximation errors for simulating CARMA-pTS processes.
\subsection{Series representation for $p$-tempered stable distributions}\label{subsec:ptempseries}
In this subsection we derive a series representation for $p$-tempered $\alpha$-stable distributions and L\'evy processes. We generalize the results of \cite{Rosinski2007} for tempered stable distributions, i.e., $p=1$ and \cite{Bianchi2011} for $p=2$.
\begin{thm}\label{thm:series}
Let $p>0$ and $T>0$. Let $\{E_j\}$ and $\{E_j'\}$ be i.i.d.~sequences of standard exponentials. Let $\{T_j\}$ be an i.i.d.~sequence of uniforms on $(0,T)$. Let $\{U_j\}$ be an i.i.d.~sequence of uniforms on $(0,1)$. Let $\{V_j\}$ be an i.i.d.~sequence of random vectors with distribution $Q/||\sigma||$. We assume all sequences to be independent. Set $\Gamma_j:=E_1'+\cdots+E_j'$. $\{\Gamma_j\}$ forms a Poisson point process on $(0,\infty)$ with the Lebesgue intensity measure.
\begin{enumerate}
	\item[(i)] If $\alpha\in(0,1)$, or if $\alpha\in[1,2)$ and $Q$ is symmetric, define
	\begin{equation}
	\label{eq:series1}
	L_t:=\sum_{j=1}^{\infty}\mathds{1}_{(0,t]}(T_j)\left(\left(\frac{\alpha \Gamma_j}{||\sigma||T}\right)^{-1/\alpha}\wedge\frac{E_j^{1/p}U_j^{1/\alpha}}{||V_j||^{1/p}}\right)\frac{V_j}{||V_j||}, \ \ \ t\in[0,T].
	\end{equation}
	Then the series \eqref{eq:series1} converges a.s.~uniformly in $t\in[0,T]$ to a L\'evy process $L$ such that $L_1\sim\mu\in TS_{\alpha}^p$ with L\'evy measure $M$ as in \eqref{eq:Levymeasureptemp}. 
	\item[(ii)] If $\alpha\in[1,2)$ and if $Q$ is non-symmetric and $1+p\neq\alpha$ and additionally assuming $\int_{\mathbb{R}^d}||w||R(\upd w)<\infty$ for $\alpha\in(1,2)$ and $\int_{\mathbb{R}^d}||w||\left|\log||w||\right|R(\upd w)<\infty$ for $\alpha=1$, define
		\begin{equation}
	\label{eq:series2}
	L_t:=\sum_{j=1}^{\infty}\mathds{1}_{(0,t]}(T_j)\left(\left(\frac{\alpha \Gamma_j}{||\sigma||T}\right)^{-1/\alpha}\wedge\frac{E_j^{1/p}U_j^{1/\alpha}}{||V_j||^{1/p}}\right)\frac{V_j}{||V_j||}-\frac{t}{T}\left(\frac{\alpha j}{||\sigma||T}\right)^{-1/\alpha}x_0+tb_T, \ \ \ t\in[0,T],
	\end{equation}
	where 
	\begin{equation}
	\label{eq:x0}
	x_0=\mathbb{E}\left[\frac{V_j}{||V_j||}\right]=||\sigma||^{-1}\int_{\mathbb{R}^d}u\sigma(\upd u),
	\end{equation}
	and
	\begin{equation}
	\label{eq:bT}
	b_T=\begin{cases}\alpha^{-1/\alpha}\zeta\left(\frac{1}{\alpha}\right)\left(||\sigma||T\right)^{1\alpha}T^{-1}x_0+\frac{1}{\alpha-1}\Gamma\left(\frac{1+p-\alpha}{p}\right)x_1,& 1<\alpha<2,\\
	\left(\frac{\gamma+p}{p}+\log(||\sigma||T)\right)x_1-\int_{\mathbb{R}^d}w\log||w||R(\upd w),&\alpha=1,
	\end{cases}
	\end{equation}
	where
	\begin{equation}
	\label{eq:x1}
	x_1=\int_{\mathbb{R}^d}wR(\upd w),
	\end{equation}
	and $\zeta$ denotes the Riemann zeta function. Then the series \eqref{eq:series2} converges a.s.~uniformly in $t\in[0,T]$ to a L\'evy process $L$ such that $L_1\sim\mu\in TS_{\alpha}^p$ with L\'evy measure $M$ as in \eqref{eq:Levymeasureptemp}. 
\end{enumerate}
\end{thm}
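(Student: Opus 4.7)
The plan is to cast the right-hand side of \eqref{eq:series1}/\eqref{eq:series2} into the Poisson point process (PPP) framework of \cite{Rosinski2001,Rosinski2007}, exhibit the pushforward as the L\'evy measure \eqref{eq:Levymeasureptemp}, and in case (ii) compute the deterministic compensator $tb_T$ needed to reproduce the drift of $\mu$. Since $\{\Gamma_j\}$ is a unit-rate PPP on $(0,\infty)$, marking its points by the independent i.i.d.~sequences $\{T_j\},\{E_j\},\{U_j\},\{V_j\}$ yields a PPP on $(0,\infty)\times(0,T)\times(0,\infty)\times(0,1)\times\mathbb{R}^d$ with intensity $\upd\Gamma\otimes T^{-1}\mathds{1}_{(0,T)}(t)\upd t\otimes e^{-E}\upd E\otimes \mathds{1}_{(0,1)}(U)\upd U\otimes ||\sigma||^{-1}Q(\upd v)$. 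Writing $H(\Gamma,E,U,v)$ for the $j$-th summand in \eqref{eq:series1}, the Poisson mapping theorem applied to $(\Gamma,t,E,U,v)\mapsto(t,H(\Gamma,E,U,v))$ produces a PPP on $(0,T)\times\mathbb{R}^d$ whose atoms will be the candidate (jump time, jump size) pairs of $L$.

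To identify the L\'evy measure I would perform the change of variable $\rho=(\alpha\Gamma/(||\sigma||T))^{-1/\alpha}$, which turns $\upd\Gamma$ into $||\sigma||T\,\rho^{-\alpha-1}\upd\rho$ and supplies the stable factor $r^{-\alpha-1}$. Using the polar decomposition of $Q$ from \eqref{eq:Qmeasure}, the representation \eqref{qQrelation}, and the elementary identity
\[
\int_0^\infty\!\int_0^1\mathds{1}_{\{E^{1/p}U^{1/\alpha}/||v||^{1/p}>x\}}\,\upd U\, e^{-E}\upd E=\int_0^1 e^{-||v||x^p/u^{p/\alpha}}\upd u,
\]
a Fubini computation together with the change of variable $u=(x/z)^{\alpha}$ verifies that the pushforward of the intensity under $H$ equals $T^{-1}\mathds{1}_{(0,T)}(t)\upd t\otimes TM(\upd y)$ with $M$ as in \eqref{eq:Levymeasureptemp}. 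In concrete terms, the $\min$ realises the tempering: the $\alpha$-stable radius $\rho$ is clipped by the independent envelope governed by $E$, $U$ and $||V||$, producing after averaging exactly the factor $q(r^p,u)$ on top of the bare stable density $r^{-\alpha-1}$.

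Convergence is handled separately in the two cases. For case (i) with $\alpha\in(0,1)$, the summands are absolutely summable because $\Gamma_j\sim j$ a.s.~and $\sum j^{-1/\alpha}<\infty$; for $\alpha\in[1,2)$ with $Q$ symmetric, the symmetric summands converge a.s.~uniformly on $[0,T]$ by L\'evy's maximal inequality and Kolmogorov's three-series theorem. In case (ii) the summands are no longer absolutely summable, but the deviation of $H$ from its pure-stable component $(\alpha\Gamma_j/(||\sigma||T))^{-1/\alpha}V_j/||V_j||$ is absolutely summable under the hypotheses $\int||w||R(\upd w)<\infty$ (respectively $\int||w||\,|\log||w|||R(\upd w)<\infty$ for $\alpha=1$), by virtue of the change-of-variable formula \eqref{eq:changeofvariable}. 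Subtracting the mean $(\alpha j/(||\sigma||T))^{-1/\alpha}x_0$ from each pure-stable summand is then the standard LePage centering for $\alpha$-stable series and renders the residual convergent. The deterministic drift $tb_T$ is finally chosen so that the characteristic exponent of $L_1$ matches that of $\mu$: the $\zeta(1/\alpha)$ contribution arises from $\sum_{j\le N}j^{-1/\alpha}=\zeta(1/\alpha)+O(N^{1-1/\alpha})$ for $1<\alpha<2$, while the Euler--Mascheroni constant and the logarithmic term for $\alpha=1$ come from $\sum_{j\le N}j^{-1}=\log N+\gamma+o(1)$; the exclusion $1+p\neq\alpha$ precisely avoids the pole of $\Gamma((1+p-\alpha)/p)$ at which the tempered residual is of the same order as the stable centering.

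The hard part, I expect, is the bookkeeping for $b_T$ in case (ii): the contribution from the tempered envelope to the LePage centering, combined with the Riemann-zeta or logarithmic asymptotics, must match against the constants $x_0,x_1$ (rewritten via the $Q$--$R$ correspondence \eqref{eq:changeofvariable}) so as to reproduce \emph{exactly} the drift of $\mu$ dictated by the L\'evy--Khintchine formula. The Step~2 pushforward, though notationally heavy, is essentially a parametric generalization of the $p=1$ case in \cite{Rosinski2007}, once the substitution $u\mapsto u^{p/\alpha}$ is carried out carefully.
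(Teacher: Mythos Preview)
Your proposal is correct and follows essentially the same route as the paper: both cast the series into Rosi\'nski's generalized shot-noise framework, verify the key identity $\int_0^\infty\mathbb{P}[H(s;\cdot)\in A]\,\upd s=M(A)$ via the same conditioning/substitution that produces $\mathbb{P}[E^{1/p}U^{1/\alpha}>a\|v\|^{1/p}]=\alpha a^{\alpha}\int_a^\infty e^{-r^ps}r^{-\alpha-1}\upd r$, and then handle case~(ii) by computing the LePage centering $\sum_j(c_j'-c_j)$ exactly as in \cite{Rosinski2007} with the replacements $\mathbb{E}[E_1^{(1-\alpha)/p}]=\Gamma((1+p-\alpha)/p)$ and $\mathbb{E}[\log(E_1^{1/p}U_1)]=-1-\gamma/p$. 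The only presentational difference is that you phrase the L\'evy-measure identification via the Poisson mapping theorem and the change of variable $\rho=(\alpha\Gamma/(\|\sigma\|T))^{-1/\alpha}$, whereas the paper writes out the chain of equalities for $\mathbb{P}[H\in A]$ directly; the content is the same.
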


For simulation, the infinite series representations \eqref{eq:series1} and \eqref{eq:series2} need to be truncated. We follow \cite{Imai20114411} to derive the truncation error for $p$-tempered $\alpha$-stable L\'evy processes.

\begin{thm}\label{thm:trunc}
Let $p>0$, $\alpha\in(0,2)$ and $T>0$. Let $L$ be a $p$-tempered $\alpha$-stable L\'evy process with L\'evy measure $M$. For $n\in\mathbb{N}$, let 
\begin{equation}
\label{eq:truncseries}
L_t^{(n)}:=\sum_{\Gamma_j\le Tn}\mathds{1}_{(0,t]}(T_j)H(\Gamma_j;(V_j,E_j,U_j))-\frac{t}{T}\mathbb{E}\left[H(\Gamma_j;(V_j,E_j,U_j))\right],\ \ \ t\in[0,T],
\end{equation}
where $T_j,\Gamma_j,V_j,E_j,U_j$ are as in Theorem \ref{thm:series} and $H$ given in \eqref{eq:Hfunction}. Then, $L^{(n)}$ is a compound Poisson process with characteristic triplet $(0,0,M_{n})$ and converges to the characteristic triplet $(0,0,M)$ as $n\to\infty$, where 
\begin{equation}
\label{eq:truncmeasure}
M_{n}((x,\infty)B)=\left(n\frac{\alpha}{||\sigma||}x^{\alpha}\wedge1\right)M((x,\infty)B),
\end{equation}
for $x>0$ and $B\in\mathcal{B}(\mathbb{S}^{d-1})$.
\end{thm}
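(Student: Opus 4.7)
The central observation is that the family indexing the sum in \eqref{eq:truncseries} is a Poisson point process (PPP). Since $\{\Gamma_j\}$ is a unit-rate PPP on $(0,\infty)$ and the marks $(T_j,V_j,E_j,U_j)$ are i.i.d.~and independent of it, $\{(\Gamma_j,T_j,V_j,E_j,U_j)\}$ is a PPP on $(0,\infty)\times(0,T)\times\mathbb{R}^d\times(0,\infty)\times(0,1)$ with intensity
\begin{equation}
\upd\gamma\otimes T^{-1}\upd t\otimes ||\sigma||^{-1}Q(\upd v)\otimes e^{-e}\upd e\otimes\upd u.
\end{equation}
Restricting to $\{\Gamma_j\le Tn\}$ retains a.s.~only finitely many points (a Poisson count with mean $Tn$), so the sum in \eqref{eq:truncseries} is a.s.~well-defined.

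Applying the mapping theorem to $(\gamma,t,v,e,u)\mapsto(t,H(\gamma;(v,e,u)))$ identifies the jumps of the uncompensated sum as a PPP on $(0,T)\times\mathbb{R}^d$ with intensity $\upd t\otimes M_n(\upd x)$, where
\begin{equation}
M_n(A)=\frac{1}{T}\int_0^{Tn}\int_{\mathbb{R}^d}\int_0^\infty\int_0^1 \mathds{1}_A\!\left(H(\gamma;(v,e,u))\right)\upd u\,e^{-e}\upd e\,\frac{Q(\upd v)}{||\sigma||}\,\upd\gamma.
\end{equation}
In particular, $M_n(\mathbb{R}^d)=n<\infty$, so the uncompensated sum is a compound Poisson process on $[0,T]$ with L\'evy measure $M_n$. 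By Campbell's theorem its mean equals $t\int_{\mathbb{R}^d} x\,M_n(\upd x)$, which coincides with the correction $(t/T)\mathbb{E}[\cdot]$ subtracted in \eqref{eq:truncseries}; hence $L^{(n)}$ has characteristic triplet $(0,0,M_n)$ in the (non-truncated) L\'evy--Khintchine convention.

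To establish \eqref{eq:truncmeasure}, I would specialize $A=(x,\infty)B$ and decompose the condition $H(\gamma;(v,e,u))\in A$ into (i) $v/||v||\in B$ and (ii) $\gamma<||\sigma||Tx^{-\alpha}/\alpha$ together with $eu^{p/\alpha}>x^p||v||$. The $\gamma$-integration immediately yields the truncation factor $Tn\wedge ||\sigma||Tx^{-\alpha}/\alpha$. For the remaining integral I would decompose $Q(\upd v)=\sigma(\upd\omega)\,Q(\upd\rho|\omega)$ via \eqref{eq:Qmeasure}, carry out the $(e,u)$-double integral using the substitution $w=x^p\rho/u^{p/\alpha}$, and match the outcome against the tail integral of \eqref{eq:Levymeasureptemp}, expanded through \eqref{qQrelation} with the substitution $w=s^p\rho$. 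Both reductions yield the same inner integral $(1/p)\rho^{\alpha/p}\int_{x^p\rho}^\infty e^{-w}w^{-\alpha/p-1}\upd w$ up to a prefactor $\alpha x^\alpha$, which then collapses to \eqref{eq:truncmeasure}. The convergence $M_n\to M$ as $n\to\infty$ follows because $n\alpha x^\alpha/||\sigma||\wedge 1\to 1$ pointwise while the Gaussian and drift components remain zero throughout. The main obstacle is the integral identification in the last step, where careful bookkeeping is needed to reconcile two substitutions in different parametrisations and recover the multiplicative factor $\alpha x^\alpha/||\sigma||$ exactly.
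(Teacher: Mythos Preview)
Your approach is correct and shares the same skeleton as the paper's: both identify the jumps as a Poisson point process, compute its intensity via the mapping theorem, and isolate the truncation factor by integrating out $\gamma$ over $(0,Tn)$. Where you diverge is in the final identification. You propose to carry out the remaining $(v,e,u)$-integral from scratch via polar decomposition of $Q$ and two separate substitutions, then match this against the tail integral of $M$. The paper instead observes that once the $\gamma$-integral has produced the factor $n\wedge\frac{||\sigma||}{\alpha}x^{-\alpha}$, one can simply pull out $\bigl(n\frac{\alpha}{||\sigma||}x^{\alpha}\wedge1\bigr)$ and reinsert the remaining factor $\frac{||\sigma||}{\alpha}x^{-\alpha}$ as the \emph{untruncated} $\gamma$-integral, via the elementary identity
\[
\mathds{1}_{(x,\infty)}\!\left(\frac{e^{1/p}u^{1/\alpha}}{||v||^{1/p}}\right)\frac{||\sigma||}{\alpha}x^{-\alpha}
=\int_0^{\infty}\mathds{1}_{(x,\infty)}\!\left(\left(\frac{\alpha s}{||\sigma||}\right)^{-1/\alpha}\wedge\frac{e^{1/p}u^{1/\alpha}}{||v||^{1/p}}\right)\upd s.
\]
This recovers precisely the integral representation of $M((x,\infty)B)$ established in the proof of Theorem~\ref{thm:series}, so no further computation is needed. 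Your route works but redoes work already done; the paper's shortcut avoids the delicate bookkeeping you flag as the main obstacle.
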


\subsection{CARMA-pTS process}\label{subsec:ptempCARMA}

In this subsection, we discuss path simulation of stationary CARMA-pTS processes, i.e., CARMA processes driven by a $p$-tempered $\alpha$-stable L\'evy process. We follow \cite{Kawai2017} to derive a series representation with the help of Theorem \ref{thm:series}. Let $\{Y_t\}_{t\in\mathbb{R}}$ be a CARMA$({\bar{p}},{\bar{q}})$ process driven by $\{L_t\}_{t\in\mathbb{R}}$, a one-dimensional $p$-tempered $\alpha$-stable L\'evy process on the real line. Let $T>0$. We aim to simulate the path on $[0,T]$. The method of \cite{Kawai2017} is based on the following decomposition
\begin{equation}
\label{eq:CARMAdecomp}
Y_t=Y_t(\kappa,n)+Q_t(n)+R_t(\kappa,n),
\end{equation} 
where $\kappa>0$ and $n\in\mathbb{N}$. $n$ denotes the level of truncation of small jumps as in Theorem \ref{thm:series}. $\kappa$ is the threshold of jump times. $Y_t(\kappa,n)$ is exactly simulatable with a series representation. $Q_t(n)$ are the very small jumps which may be approximated by a Gaussian CARMA process. $R_t(\kappa,n)$ are the jumps which occurred  before time $-\kappa$ and are to be discarded (or replaced by their mean). There are two different methods for simulation of CARMA-pTS processes to use depending on whether the tempered stable L\'evy process is a subordinator or not. We start with the case when $L$ is not a subordinator.

Let $M$ denote the L\'evy measure for a one-dimensional $TS_{\alpha}^p$ process as in \eqref{eq:Levymeasureptemp}. Note that $M(\mathbb{R})=\infty$ and the variance $\int_{\mathbb{R}}z^2M(\upd z)<\infty$. Let $\{M_n\}_{n\in\mathbb{N}}$ be a family of measures approximating $M$ such that for each $n$, $M_n(\mathbb{R})=n$ and $M_n(A)\le M(A)$ for $A\in\mathcal{B}(\mathbb{R})$. An example of such a family is the family $\{M_{n}\}$ of Theorem \ref{thm:trunc}. Define the variance of discarded jumps 
\begin{equation}
\label{eq:vardiscjumps}
\sigma_n^2:=\int_{\mathbb{R}}z^2(M-M_n)(\upd z).
\end{equation}

\cite{Kawai2017} showed that, if for each $c>0$
\begin{equation}
\label{eq:Asmussen}
\frac{1}{\sigma_n^2}\int_{\{z^2>c\sigma_n^2\}}z^2(M-M_n)(\upd z)\to0
\end{equation}
for $n\to\infty$, then
\begin{equation}
\label{eq:Wienerapprox}
\left\{\frac{Q_t(n)}{\sigma_n}\right\}_{t\in[0,T]}\stackrel{\mathcal{D}}{\rightarrow}\left\{\int_{-\infty}^T\int_{\mathbb{R}}g(t-s)\upd W_s\right\}_{t\in[0,T]},
\end{equation}
where $\{W_t\}_{t\in[0,T]}$ is a two-sided Brownian motion. Furthermore, it holds that for each $n\in\mathbb{N}$, $\{R_t(\kappa,n)\}_{t\in[0,T]}$ converges in probability to the zero process uniformly on $[0,T]$ as $\kappa\to\infty$. Section 3.3 of \citet{cohen2007} shows that \eqref{eq:Asmussen} holds (for multivariate) tempered $\alpha$-stable processes. Their result can be easily extended to $p$-tempered stable processes.

Therefore, together with Theorems \ref{thm:series} and \ref{thm:trunc} we can propose the following approximation schemes. We stress the difference between the variance of discarded jumps $\sigma_n^2$ and $||\sigma||$ of the Borel measure $\sigma$.
\begin{scheme}\label{cor:CARMA}
Let $T>0,\kappa>0$. Let $p>0$, $\alpha\in(0,2)$. Let $\{L_t\}_{t\in\mathbb{R}}$ be a $TS_{\alpha}^{\bar{p}}$ process with L\'evy measure $M$. Let $\{W_t\}_{t\in\mathbb{R}}$ be a Brownian motion. Let ${\bar{q}}\le {\bar{p}}-1$ and let $a(z)$ and $b(z)$ be polynomials such that Assumption \ref{ass:carma} is satisfied and $\alpha_k$ and $\lambda_k$ for $k=1,\ldots,\bar{p}$ as in Lemma \ref{lem:OUdecomp}. Let $\{Y_t\}_{t\in\mathbb{R}}$ be a CARMA-pTS process.

Let $n\in\mathbb{N}$ and $\sigma_n^2$ as in \eqref{eq:vardiscjumps} and $\sigma_n:=\sqrt{\sigma_n^2}$.
Let $Z_{\kappa,n}$ be a Poisson random variable with mean $(T+\kappa)n$. Let $\{\Lambda_{(j)}\}_{j=1,\ldots,Z_{\kappa,n}}$ be the ascending order statistic of $Z_{\kappa,n}$ i.i.d.~uniform random variables on $(0,\alpha n/||\sigma||)$. Let $\{E_j\}$ be a i.i.d.~sequence of standard exponentials. Let $\{T_j\}$ be an i.i.d.~sequence of uniforms on $(-\kappa,T)$. Let $\{U_j\}$ be an i.i.d.~sequence of uniforms on $(0,1)$. Let $\{V_j\}$ be an i.i.d.~sequence of random vectors with distribution $Q/||\sigma||$. We assume all sequences to be independent.

Then, $Y_t$ for $t\in[0,T]$ can be approximated by
\begin{enumerate}
\item[(i)] if $\alpha\in(0,1)$, or if $\alpha\in[1,2)$ and $Q$ is symmetric,
\begin{equation}\label{eq:CARMA1}
\sum_{k=1}^{\bar{p}}\sum_{j=1}^{Z_{\kappa,n}}\alpha_k\mathrm{e}^{\lambda_k(t-T_j)}\mathds{1}_{(-\kappa,t]}(T_j)\left(\Lambda_{(j)}^{-1/\alpha}\wedge\frac{E_j^{1/p}U_j^{1/\alpha}}{||V_j||^{1/p}}\right)\frac{V_j}{||V_j||}+\sigma_n\int_{-\infty}^t\alpha_k\mathrm{e}^{\lambda_k(t-s)}\upd W_s
\end{equation}
\item[(ii)] if $\alpha\in[1,2)$ and if $Q$ is non-symmetric and $1+p\neq\alpha$, additionally assuming $\int_{\mathbb{R}^d}||w||R(\upd w)<\infty$ for $\alpha\in(1,2)$ and $\int_{\mathbb{R}^d}||w||\left|\log||w||\right|R(\upd w)<\infty$ for $\alpha=1$,
\begin{align}
\sum_{k=1}^{\bar{p}}&\left[\sum_{j=1}^{Z_{\kappa,n}}\alpha_k\mathrm{e}^{\lambda_k(t-T_j)}\mathds{1}_{(-\kappa,t]}(T_j)\left(\Lambda_{(j)}^{-1/\alpha}\wedge\frac{E_j^{1/p}U_j^{1/\alpha}}{||V_j||^{1/p}}\right)\frac{V_j}{||V_j||}+\alpha_k\frac{1-\mathrm{e}^{\lambda_k(t+\kappa)}}{\lambda_k(T+\kappa)}\left(\frac{\alpha j}{||\sigma||(T+\kappa)}\right)^{-1/\alpha}x_0\right.\\
& - \left.\alpha_k \frac{1-\mathrm{e}^{\lambda_k(t+\kappa)}}{\lambda_k}b_{T+\kappa}+\sigma_n\int_{-\infty}^t\alpha_k\mathrm{e}^{\lambda_k(t-s)}\upd W_s\right]\label{eq:CARMA2}
\end{align}
where $b_{T+\kappa}$ as in \ref{eq:bT}, and with $x_0,x_1$ are as in Theorem \ref{thm:series}.
\end{enumerate}
\end{scheme}

For a tempered stable subordinator, i.e., a non-negative BDLP, together with a non-negative kernel function \eqref{eq:kernelg} the resulting stationary CARMA process is non-negative. We could use the same approximation scheme as in \eqref{eq:CARMA1}. However, as \cite{Kawai2017} noted, the Gaussian CARMA process would destroy its non-negativity. Therefore, we simulate $Y_t(\kappa,n)+\mathbb{E}[Q_t(n)]+\mathbb{E}[R_t(\kappa,n)]$ instead of discarding $Q_t(n)$ and $R_t(\kappa,n)$.

\begin{scheme}\label{cor:CARMAsub}
Let $T>0,\kappa>0$. Let $p>0,\alpha\in(0,1)$. Let $\{L_t\}_{t\in\mathbb{R}}$ be a $TS_{\alpha}^{p}$ subordinator with L\'evy measure $M$. Let ${\bar{q}}\le {\bar{p}}-1$ and let $a(z)$ and $b(z)$ be polynomials such that Assumption \ref{ass:carma} is satisfied and $\alpha_k$ and $\lambda_k$ for $k=1,\ldots,\bar{p}$ as in Lemma \ref{lem:OUdecomp}. Let $\{Y_t\}_{t\in\mathbb{R}}$ be a CARMA-pTS process.
Let $n\in\mathbb{N}$. Let the random sequences be as in Scheme \ref{cor:CARMA}.
Then, $Y_t$ for $t\in[0,T]$ can be approximated by
\begin{equation}\label{eq:CARMAsub}
\sum_{k=1}^{\bar{p}}\sum_{j=1}^{Z_{\kappa,n}}\alpha_k\mathrm{e}^{\lambda_k(t-T_j)}\mathds{1}_{(-\kappa,t]}(T_j)\left(\Lambda_{(j)}^{-1/\alpha}\wedge\frac{E_j^{1/p}U_j^{1/\alpha}}{||V_j||^{1/p}}\right)-\int_{\mathbb{R}_+}z(M-M_n)(\upd z)\frac{\alpha_k}{\lambda_k}-\int_{\mathbb{R}_+}zM_n(\upd z)\frac{\alpha_k}{\lambda_k}\mathrm{e}^{\lambda_k(t+\kappa)}.
\end{equation}
\end{scheme}

\begin{rem}
In Scheme \ref{cor:CARMAsub}, we can omit the term $\frac{V_j}{||V_j||}$ because for a subordinator the L\'evy measure is only concentrated on $\mathbb{R}_+$ and thus numerator and denominator coincide.
\end{rem}

\begin{rem}
As mentioned above, $TS_{\alpha}^{p}$ distributions are part of generalized tempered stable distributions of \cite{Rosinski2010}. In case of a generalized tempered stable distribution that is not in  $TS_{\alpha}^{p}$ of course Theorem \ref{thm:series} does not apply anymore. However, \cite{Rosinski2010} derived series representations with the inverse L\'evy measure method and the rejection method. Then, \eqref{eq:CARMA1}, \eqref{eq:CARMA2} and \eqref{eq:CARMAsub} are modified to contain the corresponding terms of the series of Theorems 5.1 and 5.5 in \cite{Rosinski2010} (it is however necessary to check whether the generalized tempered stable distribution in question still fulfills the Wiener approximation assumption).
\end{rem}

\begin{rem}
The proposed Scheme \ref{cor:CARMAsub} can be extended to the case of multivariate CARMA processes (MCARMA) introduced in \cite{Marquardt2007}. In order to do so, we can replace Lemma \ref{ass:carma} with its multivariate version in Proposition 5.1 of \cite{Schlemm2012}. Using this, a stationary MCARMA process can be expressed as a sum of dependent, complex-valued, and multivariate OU processes. The resulting formulas are lengthy and omitted but follow the same principle as in Scheme \ref{cor:CARMAsub} above.
\end{rem}

The above allows us to easily derive a bound for the approximation mean-squared error, which obviously depends on the specific underlying background-driving tempered stable process.
\begin{cor}\label{cor:error}
Under the assumptions of Scheme \ref{cor:CARMA}, let $\widetilde{Y}_t(\kappa,n)$ denotes the approximation of $Y_t$ according to \eqref{eq:CARMA1} or \eqref{eq:CARMA2}. Then
\begin{align}
\label{eq:error}
\mathbb{E}\left[(Y_t-\widetilde{Y}_t(\kappa,n))^2\right]\le& -\sigma_n^2\left(\sum_{k=1}^{\bar{p}}\frac{1}{2\lambda_k}\right)\left(\sum_{k=1}^{\bar{p}}\alpha_k^2\right)+\left(\int_{\mathbb{R}}z(M-M_n)(\upd z)\right)^2\left(\sum_{k=1}^{\bar{p}}\frac{\alpha_k}{\lambda_k}\right)^2\\
&-\int_{\mathbb{R}}z^2M_n(\upd z)\left(\sum_{k=1}^{\bar{p}}\frac{\mathrm{e}^{2\lambda_k(\kappa+t)}}{2\lambda_k}\right)\left(\sum_{k=1}^{\bar{p}}\alpha_k^2\right) + \left(\int_{\mathbb{R}}zM_n(\upd z)\right)^2\left(\sum_{k=1}^{\bar{p}}\frac{\alpha_k\mathrm{e}^{\lambda_k(\kappa+t)}}{\lambda_k}\right)^2.
\end{align}
If $L$ is a subordinator as in Scheme \ref{cor:CARMAsub} and $\widetilde{Y}_t(\kappa,n)$ denote the approximation of $Y_t$ according to \eqref{eq:CARMAsub}, then 
\begin{equation}
\label{eq:suberror}
\mathbb{E}\left[(Y_t-\widetilde{Y}_t(\kappa,n))^2\right]\le-\sigma_n^2\left(\sum_{k=1}^{\bar{p}}\frac{1}{2\lambda_k}\right)\left(\sum_{k=1}^{\bar{p}}\alpha_k^2\right)-\int_{\mathbb{R}_+}z^2M_n(\upd z)\left(\sum_{k=1}^{\bar{p}}\frac{\mathrm{e}^{2\lambda_k(\kappa+t)}}{2\lambda_k}\right)\left(\sum_{k=1}^{\bar{p}}\alpha_k^2\right).
\end{equation}
\end{cor}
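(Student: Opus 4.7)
The plan is to begin from the decomposition \eqref{eq:CARMAdecomp},
\[
Y_t=Y_t(\kappa,n)+Q_t(n)+R_t(\kappa,n),
\]
and observe that in Scheme \ref{cor:CARMA} the piece $Y_t(\kappa,n)$ is reproduced exactly, $Q_t(n)$ is replaced by the independent Gaussian CARMA $\sigma_n\int_{-\infty}^{t}g(t-s)\upd W_s$, and $R_t(\kappa,n)$ is dropped, so the approximation error is
\[
Y_t-\widetilde Y_t(\kappa,n)=Q_t(n)-\sigma_n\!\!\int_{-\infty}^{t}g(t-s)\upd W_s+R_t(\kappa,n),
\]
with $g$ the OU kernel of Lemma \ref{lem:OUdecomp}. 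Because the small-jump part $L-L_n$, the large-jump part $L_n$ and the independent Brownian motion $W$ arise from disjoint pieces of the underlying (Rosi\'nski) Poisson random measure together with an independent Gaussian source, these three summands are mutually independent.

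Next I would expand the mean-squared error as variance plus squared bias. The L\'evy--It\^o isometry gives $\mathrm{Var}(Q_t(n))=\sigma_n^2\int_0^\infty g(s)^2\upd s$, $\mathrm{Var}(\sigma_n\int g\upd W)=\sigma_n^2\int_0^\infty g(s)^2\upd s$, and $\mathrm{Var}(R_t(\kappa,n))=\bigl(\int z^2M_n(\upd z)\bigr)\int_{t+\kappa}^\infty g(s)^2\upd s$. The corresponding means are $\mathbb{E}[Q_t(n)]=\bigl(\int z(M-M_n)(\upd z)\bigr)\int_0^\infty g(s)\upd s$ and $\mathbb{E}[R_t(\kappa,n)]=\bigl(\int zM_n(\upd z)\bigr)\int_{t+\kappa}^\infty g(s)\upd s$, while the Brownian surrogate is centered. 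The closed-form coefficients $-\alpha_k/\lambda_k$ and $-e^{\lambda_k(\kappa+t)}/\lambda_k$ follow by term-wise integration, using $\int_a^\infty e^{\lambda_ks}\upd s=-e^{\lambda_ka}/\lambda_k$ (valid since $\lambda_k<0$ by Assumption \ref{ass:carma}).

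The decisive step that puts the bound into the product-of-sums shape appearing on the right-hand side of \eqref{eq:error} is the Cauchy--Schwarz inequality applied pointwise in $s$,
\[
g(s)^2=\Bigl(\sum_{k=1}^{\bar p}\alpha_k e^{\lambda_ks}\Bigr)^2\le\Bigl(\sum_{k=1}^{\bar p}\alpha_k^2\Bigr)\Bigl(\sum_{k=1}^{\bar p}e^{2\lambda_ks}\Bigr),
\]
followed by integration over $s$, which converts each $\int g^2$ into $(\sum_k\alpha_k^2)(\sum_k-1/(2\lambda_k))$ for $\mathrm{Var}(Q_t(n))$ and into its shifted counterpart $(\sum_k\alpha_k^2)(\sum_k-e^{2\lambda_k(\kappa+t)}/(2\lambda_k))$ for $\mathrm{Var}(R_t(\kappa,n))$. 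The squared-bias terms $(\mathbb{E}[Q_t(n)])^2$ and $(\mathbb{E}[R_t(\kappa,n)])^2$ produce the second and fourth summands of \eqref{eq:error} directly, with no further inequality. The subordinator case \eqref{eq:suberror} is treated identically but with the Brownian surrogate removed and $Q_t(n),R_t(\kappa,n)$ recentered by construction (see Scheme \ref{cor:CARMAsub}), so the bias contributions vanish and only the two variance summands survive.

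The main obstacle is essentially bookkeeping rather than any single hard step: one must (i) correctly identify each error component as an independent stochastic integral driven by a disjoint piece of the Rosi\'nski Poisson random measure and the independent Brownian, (ii) track signs with care since the $\lambda_k$ are negative and appear with a minus sign in the final bound, and (iii) apply Cauchy--Schwarz in the direction that produces the tight product $(\sum_k\alpha_k^2)(\sum_k-1/(2\lambda_k))$ rather than the coarser $\bar p$-factor coming from the alternative inequality $(\sum_k x_k)^2\le\bar p\sum_k x_k^2$.
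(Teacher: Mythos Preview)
Your approach is essentially the paper's: both start from the decomposition $Y_t=Y_t(\kappa,n)+Q_t(n)+R_t(\kappa,n)$, compute $\mathbb{E}[Q_t(n)]$, $\mathbb{E}[R_t(\kappa,n)]$, $\mathrm{Var}[Q_t(n)]$, $\mathrm{Var}[R_t(\kappa,n)]$ via the kernel integrals $\int g(t-s)\,\upd s$ and $\int g(t-s)^2\,\upd s$, and then apply Cauchy--Schwarz pointwise to $g(s)^2=\bigl(\sum_k\alpha_k e^{\lambda_k s}\bigr)^2$ to obtain the product-of-sums form. The subordinator case is handled identically in both.

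The one place your bookkeeping diverges is the very first step. The paper simply writes
\[
\mathbb{E}\bigl[(Y_t-\widetilde Y_t(\kappa,n))^2\bigr]\le\mathbb{E}[Q_t(n)^2]+\mathbb{E}[R_t(\kappa,n)^2]
\]
and then splits each second moment as variance plus squared mean, yielding exactly the four summands of \eqref{eq:error}. Your route instead keeps the Gaussian surrogate $G=\sigma_n\int g\,\upd W$ as a third independent piece and expands the error as total variance plus total squared bias, which gives $\mathrm{Var}(Q)+\mathrm{Var}(G)+\mathrm{Var}(R)+(\mathbb{E}Q+\mathbb{E}R)^2$. That carries an extra $\mathrm{Var}(G)=\sigma_n^2\int g^2$ (effectively doubling the first summand) and the cross term $2\,\mathbb{E}Q\,\mathbb{E}R$ in the squared bias, so your assertion that the squared-bias terms produce the second and fourth summands of \eqref{eq:error} ``directly, with no further inequality'' is not quite right. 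To recover the stated four-term bound you should follow the paper and pass to $\mathbb{E}[Q_t(n)^2]+\mathbb{E}[R_t(\kappa,n)^2]$ at the outset rather than expanding the three-component error.
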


The formulas \eqref{eq:error} and \eqref{eq:suberror} appear unhandy but we can explicitly compute the error bounds for a specific parameter constellation with the help of the expressions for the first and second moments of the L\'evy measure to be found in Appendix \ref{app:moments}.

\section{Monte Carlo study}\label{sec:MCstudy}

In this section, we conduct several Monte Carlo experiments to test the presented sampling routines. We follow up on the illustrative examples introduced earlier in Section \ref{sec:distributions}. Unfortunately, there is no direct way to test whether a sample path was generated by a CARMA-pTS process. This is because the marginal distribution of a CARMA-pTS is not explicitly given. We therefore compare the empirical means and variances with their theoretical counterparts. Additionally, we inspect the empirical distribution of marginals given by the truncated series representation of a tempered stable L\'evy process.


\begin{example}[continues=expl:TSS]
We start with the pTSS as the BDLP. We consider a range of parameter constellations to gauge the influence of different parameter values on the accuracy of our method. We choose $\alpha\in\{0.5,0.8\}$, $p\in\{0.5,1,2\}$, $\delta=1$ and $\lambda=1$ as parameters for the pTSS. (In unreported results we see that the impact for scaling and tempering parameters is negligible.) To obtain a non-negative kernel function which fulfills Assumption \ref{ass:carma} we set $\bm a = (3,2)^{\mathrm{T}}$ and $\bm b=(3,1)^{\mathrm{T}}$. We simulate paths on $[0,T]$ with $T=100$. We make use of the decomposition $Y_t(\kappa,n)+\mathbb{E}[Q_t(n)]+\mathbb{E}[R_t(\kappa,n)]$ where $Y_t(\kappa,n)$ is the truncated series in \eqref{eq:CARMAsub} and $\mathbb{E}[Q_t(n)]$ and $\mathbb{E}[R_t(\kappa,n)]$ are the two remaining terms in \eqref{eq:CARMAsub}. The expectations are analytically available. However, the formulas are lengthy and relegated to Appendix \ref{app:moments}. For $Y_t(\kappa,n)$, next to standard random sequences $\{E_j\},\{U_j\},\{T_j\},\{\Lambda_j\}$, we need to simulate $V_j\sim Q(\upd v)/||\sigma||$. In the present case, $||\sigma||=\delta=1$ and thus $V_j=\lambda$ with probability 1. Therefore, $Y_t(\kappa,n)$ in \eqref{eq:CARMAsub} reduces to
\begin{equation}
\sum_{k=1}^{2}\sum_{j=1}^{Z_{\kappa,n}}\alpha_k\mathrm{e}^{\lambda_k(t-T_j)}\mathds{1}_{(-\kappa,t]}(T_j)\left(\Lambda_{(j)}^{-1/\alpha}\wedge\frac{E_j^{1/p}U_j^{1/\alpha}}{\lambda^{1/p}}\right),
\end{equation}
with $\lambda=1$. For the above choice of $\bm a$ and $\bm b$ we have that $\alpha_1=2,\alpha_2=-1, \lambda_1=-1,\lambda_2=-2$.
We compare the accuracy for levels of truncation $n\in\{10,100,1000,10000\}$. We set the threshold of jump times at $\kappa=100$.

Figure \ref{fig:CARMATSS} plots a typical paths of CARMA-pTSS process for $\alpha=0.5,n=10000,\kappa=100$. The top panel shows a path for $p=0.5$, the middle for $p=1$ and the bottom for $p=2$. The remaining parameters remain fixed at the aforementioned values. To ensure a fair comparison, we employ the same random sequence seed across all plots. We observe that for lower values of $p$, the path frequently traverses through smaller values, yet features higher peaks when compared to the paths corresponding to larger values of $p$. This observation aligns with an increase in the average value across the path as $p$ increases.

\begin{figure*}
	\centering
  \subfloat[][]{\includegraphics[width=.65\textwidth]{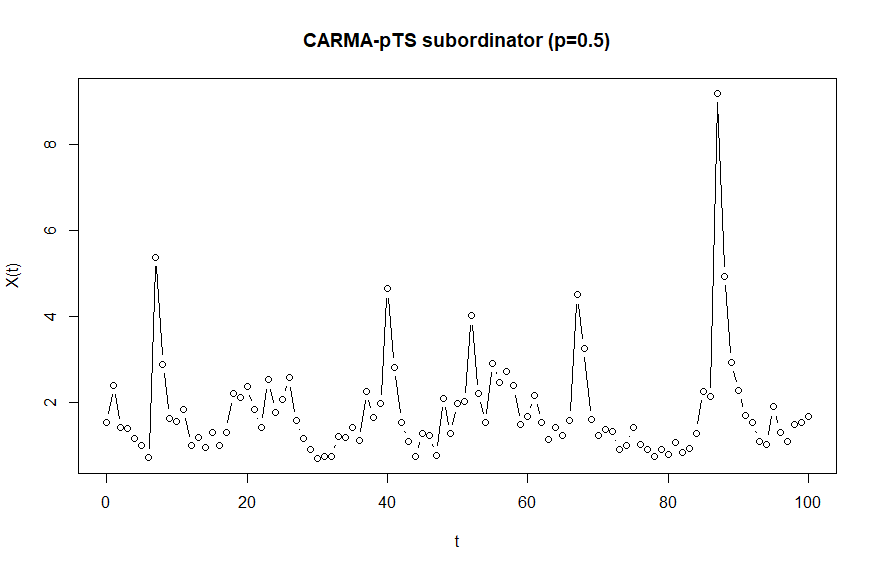}} \par
  
  \subfloat[][]{\includegraphics[width=.65\textwidth]{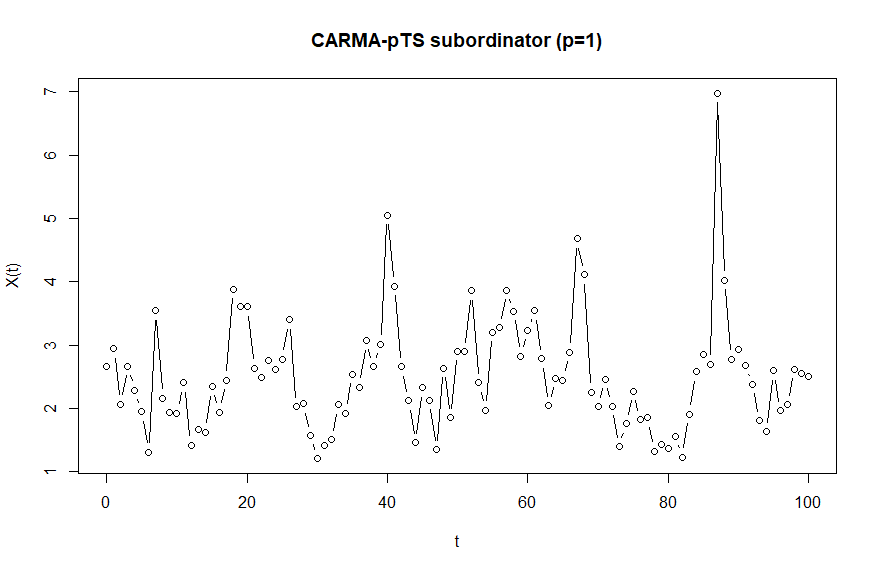}}\par

	\subfloat[][]{\includegraphics[width=.65\textwidth]{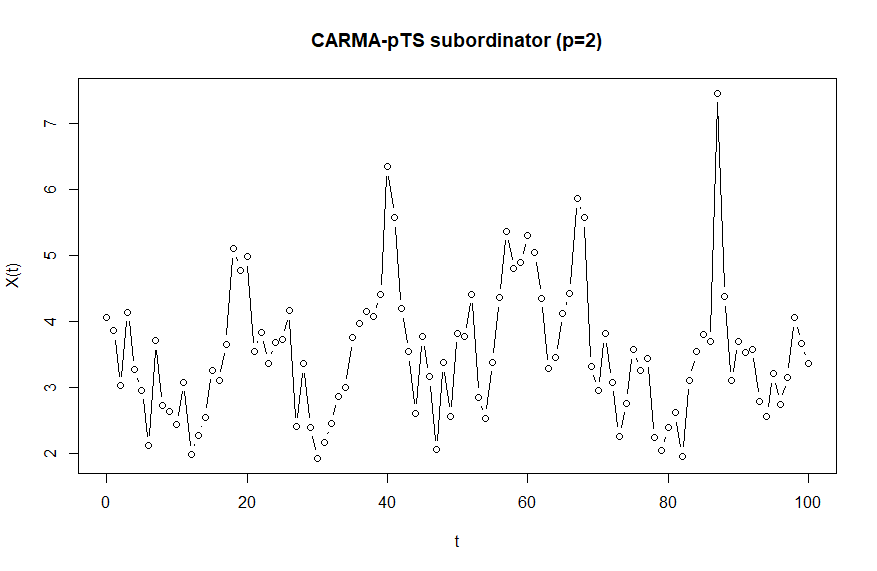}}\par
  
  \caption{Sample paths of the CARMA-pTSS process for different $p$ on $[0,100]$. Other parameters fixed at $\alpha=0.5,\delta=\lambda=1,\bm a = (3,2)^{\mathrm{T}},\bm b=(3,1)^{\mathrm{T}}$. We use \eqref{eq:CARMAsub} for simulation with $n=10000,\kappa=100.$}
  \label{fig:CARMATSS}
\end{figure*}

Table \ref{tab:pTSS} reports absolute accuracy of Monte Carlo estimations of $\mathbb{E}[Y_t]$ and $\mathbb{V}ar[Y_t]$, both evaluated at times $t=1$ and $t=100$. We compare the said constellations for $\alpha$, $p$ and $n$. For each parameter constellation, we conduct simulations involving 10000 CARMA-pTSS paths. The empirical mean and empirical variance are subsequently computed. The table's two center columns report the absolute accuracies defined as $\widehat{\mathbb{E}}[Y_t]-\mathbb{E}[Y_t]$, and the two right-most report $\widehat{\mathbb{V}ar}[Y_t]-\mathbb{V}ar[Y_t]$. We make the following observations. The estimations are accurate in most cases for both expectation and variance. Interestingly, the estimation results do not improve for increasing $n$. The simulation performs well even for relatively small values of $n$. This phenomenon might be attributed to the fact that, given $T=\kappa=100$, an average of 2000 jumps occurs per path for $n=10$. This number of jumps seemingly suffices for $Y_t$ to conform to its stationary distribution. We visualize this with Figure \ref{fig:densCARMATSS} which shows relative stability in the empirical density of $Y_{100}$ realizations. Unfortunately, we cannot compare the empirical distributions with the true distribution due to the lack of a closed-form function. We omit figures for the other constellations of $\alpha$ and $p$ because the pattern is the same.

\begin{table}[htbp]
  \centering
    \begin{tabular}{rrrrrrr}
		\toprule
    $\alpha$ & $p$& $n$ & \multicolumn{2}{c}{$\mathbb{E}$}       & \multicolumn{2}{c}{$\mathbb{V}ar$}   \\
          &       &       & $t=1$ & $t=100$ & $t=1$ & $t=100$ \\
					\midrule
    0.5   & 0.5   & 10    & 0.006 & 0.001 & -0.165 & 0.397 \\
          &       & 100   & 0.017 & -0.001 & 0.063 & 0.511 \\
          &       & 1000  & -0.026 & 0.014 & -0.013 & 0.078 \\
          &       & 10000 & 0.026 & 0.009 & 0.174 & 0.126 \\
          & 1     & 10    & 0.01 & -0.004 & 0.001 & -0.003 \\
          &       & 100   & -0.0003 & 0.002 & 0.013 & -0.007 \\
          &       & 1000  & -0.011 & 0.013 & -0.015 & 0.003 \\
          &       & 10000 & 0.011 & 0.005 & 0.014 & -0.011 \\
          & 2     & 10    & 0.008 & -0.001 & 0.007 & -0.001\\
          &       & 100   & -0.002 & 0.003 & 0.008 & -0.008 \\
          &       & 1000  & -0.008 & 0.013 & -0.013 & 0.006 \\
          &       & 10000 & 0.007 & 0.006 & 0.003 & -0.002 \\
    0.8   & 0.5   & 10    & 0.013 & -0.007 & -0.062 & 0.055 \\
          &       & 100   & 0.022 & 0.011 & 0.196 & 0.179 \\
          &       & 1000  & -0.031 & 0.005 & -0.144 & 0.231 \\
          &       & 10000 & 0.026 & 0.012 & 0.2 & 0.261 \\
          & 1     & 10    & 0.008 & -0.006 & -0.021 & -0.028 \\
          &       & 100   & 0.005 & 0.0006 & 0.024 & -0.017 \\
          &       & 1000  & -0.01 & 0.006 & -0.023 & 0.007 \\
          &       & 10000 & 0.011 & 0.003 & -0.002 & 0.0002 \\
          & 2     & 10    & 0.006 & -0.003 & -0.024 & -0.031 \\
          &       & 100   & 0.01 & -0.017 & 0.01 & -0.017 \\
          &       & 1000  & -0.015 & 0.0005 & -0.015 & 0.0005\\
          &       & 10000 & -0.005 & -0.003 & -0.005 & -0.003 \\
					\bottomrule
    \end{tabular}%
	  \caption{Accuracy estimations for expectations and variances based on 10000 sample paths of the CARMA-pTSS on $[0,100]$ for different $\alpha$ and $p$. Other parameters fixed at $\delta=\lambda=1,\bm a = (3,2)^{\mathrm{T}},\bm b=(3,1)^{\mathrm{T}}$. We use \eqref{eq:CARMAsub} for simulation for different $n$ with $\kappa=100.$}
		  \label{tab:pTSS}%
\end{table}%

\begin{figure*}
	\centering
\includegraphics[width=.6\textwidth]{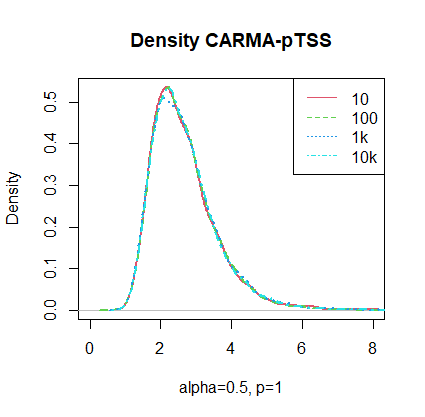}
  \caption{Empirical densities of 10000 realizations of $Y_{100}$ of a CARMA-pTSS for different $n$. Parameters fixed $\alpha=0.5,\delta=\lambda=1,\bm a = (3,2)^{\mathrm{T}},\bm b=(3,1)^{\mathrm{T}}$ and $\kappa=100$.}
  \label{fig:densCARMATSS}
\end{figure*}

The only exception with a worse accuracy is the case for $p=0.5$ for the estimation of the variance. Remarkably, this behavior remains consistent across various values of $n$ and persists for both examined $\alpha$ settings. This phenomenon can be attributed to the inherent nature of the problem: for small $p$ values, seldom yet substantial spikes occur in the paths. Consequently, the accuracy of variance estimation is compromised due to these outliers. To visualize this, Figure \ref{fig:boxplots} shows boxplots of the realizations of $Y_{100}$ for $\alpha=0.5$ and $n=10000$ for the different $p$s.   

\begin{figure*}
	\centering
  \subfloat[][]{\includegraphics[width=.33\textwidth]{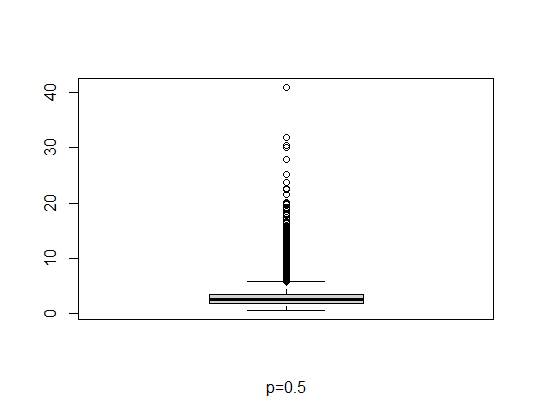}} \hfill
  \subfloat[][]{\includegraphics[width=.33\textwidth]{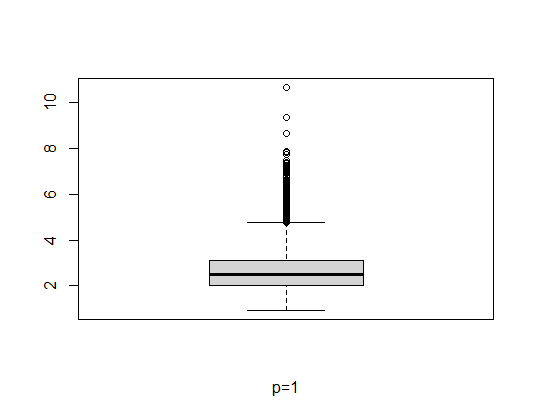}}\hfill
	\subfloat[][]{\includegraphics[width=.33\textwidth]{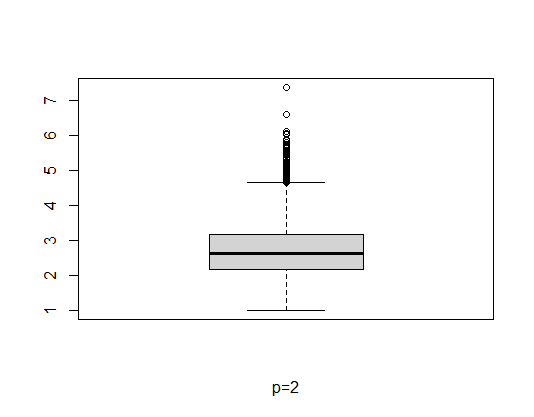}}\par
  \caption{Boxplots of 10000 realizations of $Y_{100}$ of a CARMA-pTSS for $p\in\{0.5,1,2\}$. Other parameters fixed $\alpha=0.5,\delta=\lambda=1,\bm a = (3,2)^{\mathrm{T}},\bm b=(3,1)^{\mathrm{T}}$. We use $n=10000,\kappa=100$ for simulation.}
  \label{fig:boxplots}
\end{figure*}

We extend the conducted simulation study to cases where $\kappa=1$ and $\kappa=10$, yielding qualitatively similar results, which are, however, omitted here for brevity. This demonstrates that for path simulations of a CARMA-pTSS process, modest values of $n$ and $\kappa$ turn out to be sufficient. Importantly, this is in contrast to the generation of i.i.d.~tempered stable random variates through series representations. To illustrate this, we simulate 1000 i.i.d.~random variates with a pTSS distribution for $p=1$. We subsequently compare the empirical distributions across varying $n$ values. Figure \ref{fig:densTSS} plots the empirical densities for $\alpha=0.5$ in panel (a) and $\alpha=0.8$ in panel (b). For reference, the black solid line shows the true density function. As above, set $\delta=\lambda=1$. As expected, the goodness-of-fit to the true distribution improves with increasing $n$. For $\alpha=0.5$ a value of $n=100$ seems to be sufficient. However, the situation changes considerably for $\alpha=0.8$ where even a large value of $n=100000$ fails to yield satisfactory goodness-of-fit results. 
For further comparison with the aforementioned Table \ref{tab:pTSS}, Table \ref{tab:TSS} reports the mean and variance accuracy for the i.i.d.~case. We observe that while the simulated variates exhibit a variance close to the true one for $\alpha=0.8$, there remains a substantial disparity in the mean, even for exceedingly large $n$ values.

We also touch upon different series representations such as the rejection method of \cite{Rosinski2010}. We omit a full discussion and presenting lengthy tables because the differences in accuracy is not striking. The general observation is that the series representation of Section \ref{sec:results} has a slightly smaller numerical error than the rejection method of \cite{Rosinski2010} which may also have the issue of a small number of non-rejections if $n$ is small or if a high number of jumps is needed. The observations are in line with those of \cite{Imai20114411} for $p=1$.

\begin{figure*}
	\centering
\subfloat[][]{\includegraphics[width=.5\textwidth]{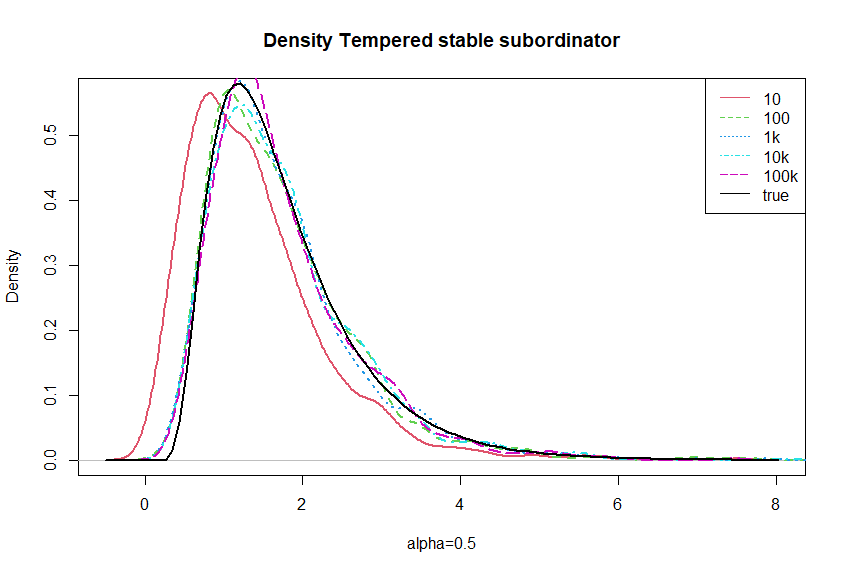}} \hfill
\subfloat[][]{\includegraphics[width=.5\textwidth]{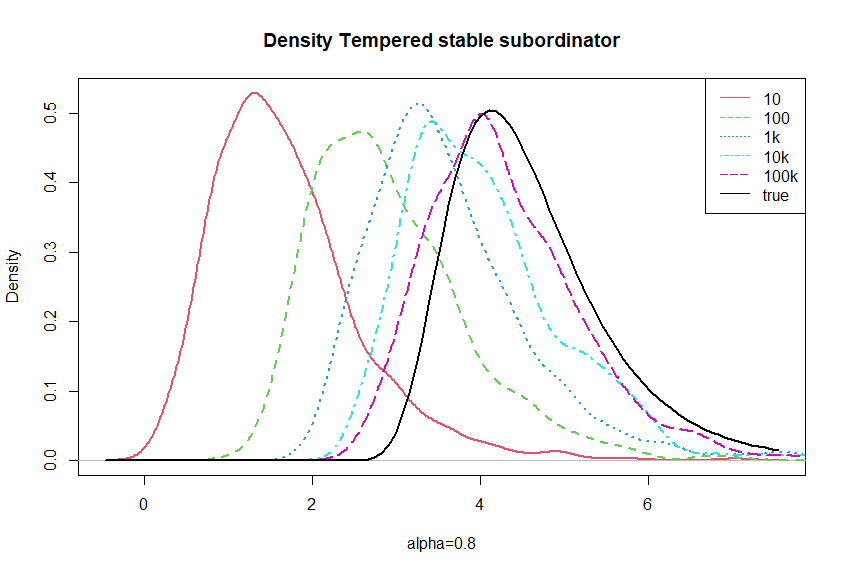}} \hfill
  \caption{Empirical densities of 1000 i.i.d.~realizations of a pTSS random variable for $\alpha\in\{0.5,0.8\}$. Other parameters fixed $p=1,\delta=\lambda=1,\bm a = (3,2)^{\mathrm{T}},\bm b=(3,1)^{\mathrm{T}}$. We compare different $n$ and set $\kappa=100$.}
  \label{fig:densTSS}
\end{figure*}

\begin{table}[htbp]
  \centering
    \begin{tabular}{rrrr}
		\toprule
    $\alpha$ & $n$ & $\mathbb{E}$       & $\mathbb{V}ar$   \\
					\midrule
    0.5       & 10    & -0.394 &  -0.095 \\
                & 100   & -0.038 & 0.073 \\
          & 1000  & -0.035 & 0.006 \\
                 & 10000 & -0.01 & -0.014  \\
					& 100000 & -0.03 &  -0.063 \\
    0.8      & 10    & -2.913 &  -0.164 \\
                 & 100   & -1.648 & 0.097 \\
                & 1000  & -0.977 &  -0.003\\
                 & 10000 & -0.533 &   0.043\\
								& 100000 & -0.312 &  -0.094\\
					\bottomrule
    \end{tabular}%
	  \caption{Accuracy estimations for expectations and variances based on 1000 i.i.d.~realizations of a pTSS random variable for $\alpha\in\{0.5,0.8\}$. Other parameters fixed $p=1,\delta=\lambda=1,\bm a = (3,2)^{\mathrm{T}},\bm b=(3,1)^{\mathrm{T}}$. We compare different $n$ and set $\kappa=100$.}
		  \label{tab:TSS}%
\end{table}%

\end{example}

\begin{example}[continues=expl:CTS]

We continue with the example of the pCTS distribution as the BDLP. As in Example \ref{expl:TSS}, we set $T=\kappa=100$, $\bm a = (3,2)^{\mathrm{T}}$ and $\bm b=(3,1)^{\mathrm{T}}$ and $p\in\{0.5,1,2\}$. In this instance, the parameters for the tempered stable distribution are $\delta_+=\delta_-=\lambda_+=\lambda_-=1$ and $\alpha\in\{1.4,1.8\}$. Given the symmetry of the measure $Q$ for these parameter choices, we leverage \eqref{eq:CARMA1} for simulation purposes. The sequence ${V_j}$ is sampled such that $\mathbb{P}[V_j=-\lambda_-]=\mathbb{P}[V_j=+\lambda_+]=0.5$. Figure \ref{fig:CARMACTS} presents a typical path for $\alpha=1.4$, $p=1$, and $n=10000$.

Table \ref{tab:pCTS} reports the estimated accuracy for both expectation and variance. The observations align with those made for CARMA-pTSS, with the additional insight that for $\alpha=1.8$, variance estimation proves to be noisy across all considered values of $p$. This contrasts with Table \ref{tab:pTSS} where this noisy behavior is only prevalent for $p=0.5$.

\begin{figure*}
	\centering
  \includegraphics[width=.75\textwidth]{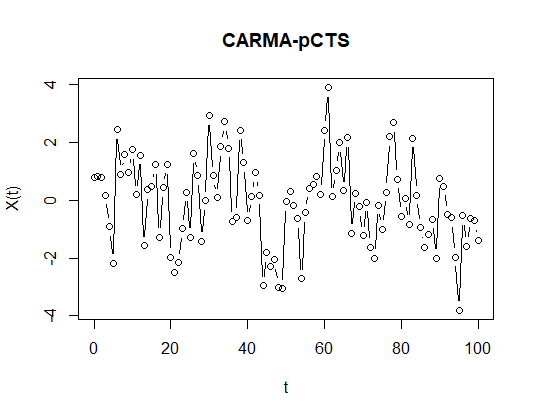}
  \caption{Sample paths of the CARMA-pCTS process on $[0,100]$. Parameters fixed at $\alpha=1.4,p=\delta_+=\delta_-=\lambda_+=\lambda_-=1,\bm a = (3,2)^{\mathrm{T}},\bm b=(3,1)^{\mathrm{T}}$. We use \eqref{eq:CARMA1} for simulation with $n=10000,\kappa=100.$}
  \label{fig:CARMACTS}
\end{figure*}

\begin{table}[htbp]
  \centering
  \begin{tabular}{rrrrrrr}
		\toprule
    $\alpha$ & $p$& $n$ & \multicolumn{2}{c}{$\mathbb{E}$}       & \multicolumn{2}{c}{$\mathbb{V}ar$}   \\
          &       &       & $t=1$ & $t=100$ & $t=1$ & $t=100$ \\
					\midrule
    1.4   & 0.5   & 10    & 0.022 & 0.006 & 0.145& -0.09 \\
          &       & 100   & -0.021 & 0.004 & -0.03 & -0.029 \\
          &       & 1000  & -0.005& -0.008 & -0.032 & 0.066 \\
          &       & 10000 & -0.021 & 0.005& -0.055 & 0.354 \\
          & 1     & 10    & 0.01 & 0.006 & 0.005 & -0.018 \\
          &       & 100   & -0.012 & 0.009 & 0.024 & -0.059 \\
          &       & 1000  & -0.002 & -0.01 & -0.048 & 0.065 \\
          &       & 10000 & -0.02 & -0.005 & -0.0005 & 0.07 \\
          & 2     & 10    & 0.006 & 0.006 & 0.007 & -0.011 \\
          &       & 100   & -0.008 & 0.009 & 0.03 & -0.065 \\
          &       & 1000  & -0.003 & -0.014 & -0.042 & 0.034 \\
          &       & 10000 & -0.022 & -0.011 & -0.005 & 0.058 \\
    1.8   & 0.5   & 10    & 0.009 & 0.017 & 0.12 & -0.099 \\
          &       & 100   & -0.041 & 0.011 & -0.164 & -0.043 \\
          &       & 1000  & 0.013 & -0.016 & -0.07 & 0.287 \\
          &       & 10000 & -0.013 & 0.018 & 0.132 & 0.321 \\
          & 1     & 10    & 0.001 & 0.013 & 0.076 & -0.065 \\
          &       & 100   & -0.038 & 0.014 & -0.149 & -0.162 \\
          &       & 1000  & 0.017 & -0.021 & -0.025 & 0.24 \\
          &       & 10000 & -0.02 & 0.005 & 0.151 & 0.257 \\
          & 2     & 10    & -0.002 & 0.013 & 0.08 & -0.052 \\
          &       & 100   & -0.033 & 0.017 & -0.13 & -0.173 \\
          &       & 1000  & 0.016 & -0.025 & -0.002 & 0.192 \\
          &       & 10000 & -0.023 & -0.005 & 0.147 & 0.22 \\
					\bottomrule
    \end{tabular}%
	  \caption{Accuracy estimations for expectations and variances based on 10000 sample paths of the CARMA-pCTS on $[0,100]$ for different $\alpha$ and $p$. Other parameters fixed at $\delta_+=\delta_-=\lambda_+=\lambda_-=1,\bm a = (3,2)^{\mathrm{T}},\bm b=(3,1)^{\mathrm{T}}$. We use \eqref{eq:CARMA1} for simulation for different $n$ with $\kappa=100.$}
		  \label{tab:pCTS}%
\end{table}%

Similarly to the previous example, we simulate i.i.d.~random variates following the pCTS distribution. We compare the empirical distribution with the theoretical counterpart. Let $\alpha\in\{1.4,1.8\}$ and set $p=1$ and all remaining parameters as above. Table \ref{fig:densCTS} and Figure \ref{tab:CTS} collect the results. We again observe as $\alpha$ increases the goodness-of-fit of the empirical distribution diminishes for fixed $n$. While $n=10000$ seems adequate for $\alpha=1.4$, even $n=100000$ fails to generate satisfactory random variates for $\alpha=1.8$. However, note that there are other simulation approaches for the CTS distribution for $\alpha\in(1,2)$, e.g., \cite{Kawai2011} propose an approximate acceptance-rejection algorithm.

\begin{figure*}
	\centering
\subfloat[][]{\includegraphics[width=.5\textwidth]{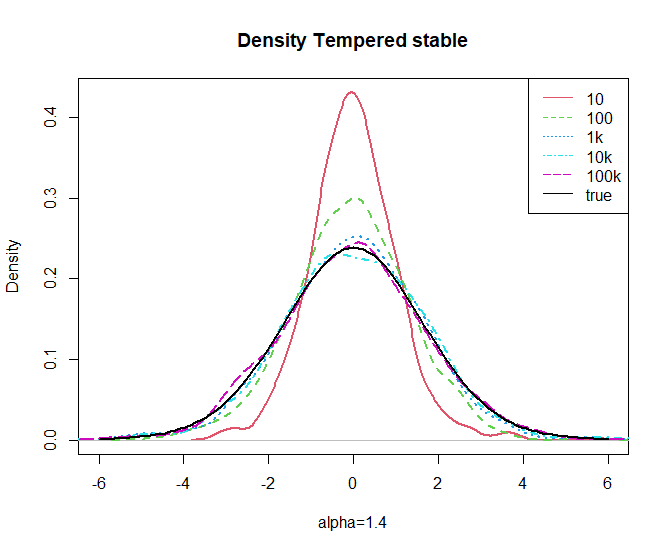}} \hfill
\subfloat[][]{\includegraphics[width=.5\textwidth]{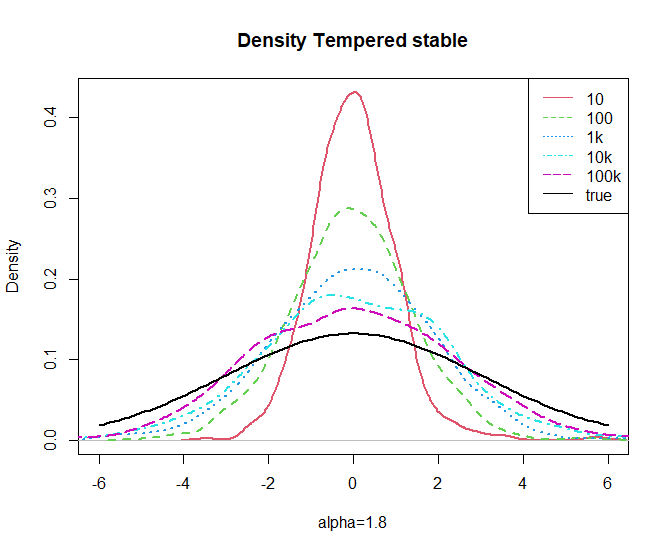}} \hfill
  \caption{Empirical densities of 1000 i.i.d.~realizations of a pCTS random variable for $\alpha\in\{1.4,1.8\}$. Other parameters fixed $p=1,\delta_+=\delta_-=\lambda_+=\lambda_-=1,\bm a = (3,2)^{\mathrm{T}},\bm b=(3,1)^{\mathrm{T}}$. We compare different $n$ and set $\kappa=100$.}
  \label{fig:densCTS}
\end{figure*}

\begin{table}[htbp]
  \centering
    \begin{tabular}{rrrr}
		\toprule
    $\alpha$ & $n$ & $\mathbb{E}$       & $\mathbb{V}ar$   \\
					\midrule
    0.5       & 10    & 0.0005 & -1.882  \\
                & 100   & -0.057 & -1.056 \\
          & 1000  & -0.007 & -0.305 \\
                 & 10000 & 0.047 & -0.03  \\
					& 100000 & -0.032 &  -0.085 \\
    0.8      & 10    & 0.0004 &  -8.221 \\
                 & 100   & -0.054 & -7.15 \\
                & 1000  & -0.004 & -5.71 \\
                 & 10000 & 0.073 & -4.399 \\
								& 100000 & -0.016 & -3.72 \\
					\bottomrule
    \end{tabular}%
	  \caption{Accuracy estimations for expectations and variances based on 1000 i.i.d.~realizations of a pCTS random variable for $\alpha\in\{1.4,1.8\}$. Other parameters fixed $p=1,\delta_+=\delta_-=\lambda_+=\lambda_-=1,\bm a = (3,2)^{\mathrm{T}},\bm b=(3,1)^{\mathrm{T}}$. We compare different $n$ and set $\kappa=100$.}
		  \label{tab:CTS}%
\end{table}%

\end{example}

\begin{example}[continues=expl:GTS]
Lastly, we turn to the case of the CARMA-p$\Gamma$TS process. Again, we set $T=\kappa=100$, $\bm a = (3,2)^{\mathrm{T}}$ and $\bm b=(3,1)^{\mathrm{T}}$ and $p\in\{0.5,1,2\}$. Additionally, we set $\beta=3,\lambda=1$ and $\alpha\in\{0.5,0.8\}$. Given that $||\sigma||=1$, $\tilde{Q}=\Gamma\left(\beta/p,\lambda\right)$ and thus it is a standard task to sample $\{V_j\}$. Because we are dealing with a subordinator we can use \eqref{eq:CARMAsub} to sample paths. Formulas for the expectations in \eqref{eq:CARMAsub} are cumbersome and relegated to Appendix \ref{app:moments}. Figure \ref{fig:CARMAGTS} shows a typical path for $\alpha=0.5,p=1,n=10000$. Table \ref{tab:pGTS} reports the estimated accuracy for expectation and variance. Unlike in the other examples, there appears to be no trend of less accurate variance estimation for small values of $p$ in this instance.

\begin{figure*}
	\centering
  \includegraphics[width=.75\textwidth]{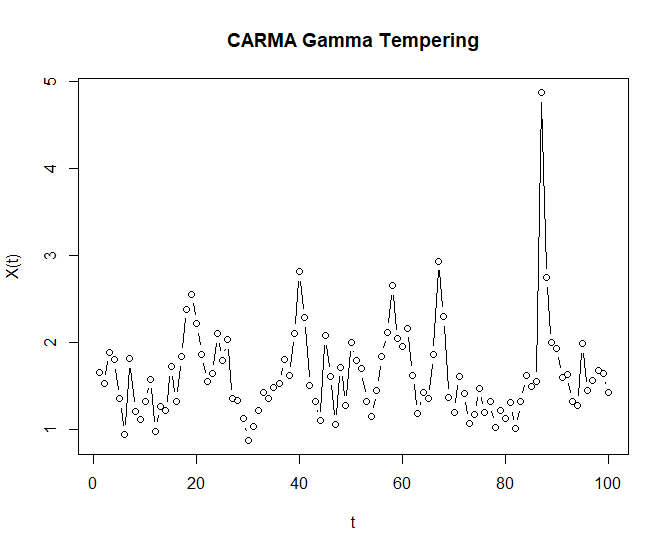}
  \caption{Sample paths of the CARMA-p$\Gamma$TS process on $[0,100]$. Parameters fixed at $\alpha=0.5,p=1,\beta=3,\lambda=1,\bm a = (3,2)^{\mathrm{T}},\bm b=(3,1)^{\mathrm{T}}$. We use \eqref{eq:CARMAsub} for simulation with $n=10000,\kappa=100.$}
  \label{fig:CARMAGTS}
\end{figure*}

\begin{table}[htbp]
  \centering
  \begin{tabular}{rrrrrrr}
		\toprule
    $\alpha$ & $p$& $n$ & \multicolumn{2}{c}{$\mathbb{E}$}       & \multicolumn{2}{c}{$\mathbb{V}ar$}   \\
          &       &       & $t=1$ & $t=100$ & $t=1$ & $t=100$ \\
					\midrule
    0,5   & 0,5   & 10    & 0,003 & -0,001 & 0,017 & 0,001 \\
          &       & 100   & -0,0002 & -0,0003 & -0,005 & -0,007 \\
          &       & 1000  & -0,002 & 0,001 & -0,009 & -0,005 \\
          &       & 10000 & 0,001 & 0,002 & -0,003 & -0,008 \\
          & 1     & 10    & 0,002 & -0,003 & -0,016 & 0,025 \\
          &       & 100   & -0,001 & 0,004 & -0,008 & 0,039 \\
          &       & 1000  & -0,004 & 0,01 & -0,014 & 0,006 \\
          &       & 10000 & 0,009 & 0,001 & 0,02 & -0,019 \\
          & 2     & 10    & 0,011 & 0,006 & 0,01 & 0,022 \\
          &       & 100   & -0,001 & -0,004 & -0,011 & -0,035 \\
          &       & 1000  & -0,012 & 0,01 & -0,015 & -0,02 \\
          &       & 10000 & 0,006 & 0,013 & -0,004 & 0,011 \\
    0,8   & 0,5   & 10    & 0,002 & -0,001 & -0,013 & -0,007 \\
          &       & 100   & 0,001 & 0,001 & -0,002 & -0,002 \\
          &       & 1000  & -0,001 & 0,001 & -0,002 & 0,004 \\
          &       & 10000 & -0.0000 & 0,002 & -0,005 & -0,001 \\
          & 1     & 10    & -0,001 & -0,001 & -0,03 & 0,013 \\
          &       & 100   & 0,001 & 0,004 & 0,003 & 0,05 \\
          &       & 1000  & -0,004 & 0,008 & -0,008 & 0,027 \\
          &       & 10000 & 0,01 & 0,002 & 0,011 & -0,007 \\
          & 2     & 10    & 0,006 & 0,001 & -0,034 & -0,026 \\
          &       & 100   & -0,0004 & -0,002 & 0,003 & -0,022 \\
          &       & 1000  & -0,015 & 0,006 & -0,049 & -0,001 \\
          &       & 10000 & 0,005 & 0,007 & -0,0002 & -0,003 \\
					\bottomrule
    \end{tabular}%
	  \caption{Accuracy estimations for expectations and variances based on 10000 sample paths of the CARMA-p$\Gamma$TS on $[0,100]$ for different $\alpha$ and $p$. Other parameters fixed at $\beta=3,\lambda=1,\bm a = (3,2)^{\mathrm{T}},\bm b=(3,1)^{\mathrm{T}}$. We use \eqref{eq:CARMAsub} for simulation for different $n$ with $\kappa=100.$}
		  \label{tab:pGTS}%
\end{table}%

\end{example}

\section{Conclusion}\label{sec:conclusion}

In this paper, we conducted a comprehensive analysis of simulation techniques for $p$-tempered $\alpha$-stable L\'evy processes, with a specific focus on CARMA processes driven by such L\'evy processes. We have shown a series representation result and propose a simulation scheme for CARMA-pTS processes based on \cite{Kawai2017} and truncated series representations. We found a representation for the error. In a simulation study we observed the efficacy of our proposed scheme in practical applications for a small level of truncation. Remarkably, our simulation method displayed higher accuracy compared to the simulation of the background-driving L\'evy process itself. An interesting direction for future research is the simultaneous estimation of the the parameters $\alpha$ and $p$ to make $TS_{\alpha}^{p}$ distributions more accessible for financial researchers. Further topics also include generalizations of CARMA processes as volatility modulated Volterra processes \cite[]{barndorff2013modelling}.

\section*{Acknowledgements}
Financial support of the German Research Foundation (Deutsche Forschungsgemeinschaft, DFG) via the project 455257011 is gratefully acknowledged.

The author is grateful to two anonymous referees as well as Christoph Hanck for valuable comments which helped to substantially improve this paper. Full responsibility is taken for
all remaining errors.

\bibliography{bibliography}
\bibliographystyle{agsm}

\appendix

\section{Appendix: Proofs}\label{app:proofs}

\begin{proof}[Proof of Theorem \ref{thm:series}]
We extend the proofs of \cite{Rosinski2007} and \cite{Bianchi2011}. As in \cite{Rosinski2001}, it is enough to prove the statements for fixed $t$. 
Define
\begin{equation}
\label{eq:Hfunction}
H(\Gamma_j;(V_j,E_j,U_j)):=\left(\left(\frac{\alpha \Gamma_j}{||\sigma||T}\right)^{-1/\alpha}\wedge\frac{E_j^{1/p}U_j^{1/\alpha}}{||V_j||^{1/p}}\right)\frac{V_j}{||V_j||}.
\end{equation}
We need to show
\begin{equation}
\label{eq:needtoshow}
\int_0^{\infty}\mathbb{P}\left[\mathds{1}_{(0,t]}(T_j)H(s;(V_j,E_j,U_j))\in A\right]=tM(A)
\end{equation}
for every $0\notin A\in\mathcal{B}(\mathbb{R}^d)$. It is enough to verify \eqref{eq:needtoshow} for sets of the form $A=\{x\in\mathbb{R}:||x||>a,\frac{x}{||x||}\in B\}$, where $a>0$ and $B\in\mathcal{B}(\mathbb{S}^{d-1})$. For such $A$ the left-hand-side of \eqref{eq:needtoshow} can be written as
\begin{align}
&\int_0^{\infty}\mathbb{P}\left[T_j\in(0,t]\right]\mathbb{P}\left[\left(\left(\frac{\alpha \Gamma_j}{||\sigma||T}\right)^{-1/\alpha}\wedge\frac{E_j^{1/p}U_j^{1/\alpha}}{||V_j||^{1/p}}\right)\frac{V_j}{||V_j||}\in A\right]\upd s\\
=&\frac{t}{T}\mathbb{E}\left[\int_0^{\infty}\mathds{1}\left(\left(\frac{\alpha \Gamma_j}{||\sigma||T}\right)^{-1/\alpha}>a,E_j^{1/p}U_j^{1/\alpha}>a||V_j||^{1/p},\frac{V_j}{||V_j||}\in B\right)\upd s\right]\\
=&t\alpha^{-1}||\sigma||a^{-\alpha}\mathbb{E}\left[\mathds{1}\left(E_j^{1/p}U_j^{1/\alpha}>a||V_j||^{1/p},\frac{V_j}{||V_j||}\in B\right)\right]\\
=&t\alpha^{-1}a^{-\alpha}\int_B\int_0^{\infty}\mathbb{P}\left[E_j^{1/p}U_j^{1/\alpha}>a||V_j||^{1/p}\right]Q(\upd s|u)\sigma(\upd u)\\
=&t\int_B\int_0^{\infty}\int_a^{\infty}\mathrm{e}^{-r^ps}r^{-\alpha-1}\upd rQ(\upd s|u)\sigma(\upd u)\\
=&t\int_B\int_a^{\infty}q(r^p,u)\upd r \sigma(\upd u)\\
=& tM(A),
\end{align}
where the fourth inequality follows by conditioning and integration by substitution,
\begin{align}
\mathbb{P}\left[E_j^{1/p}U_j^{1/\alpha}>a||V_j||^{1/p}\right]&=\int_0^1\mathrm{e}^{-\frac{a^ps}{x^{p/\alpha}}}\upd x
=a^{\alpha}\alpha\int_a^{\infty}\mathrm{e}^{-r^ps}r^{-\alpha-1}\upd r.
\end{align}
This proves \eqref{eq:needtoshow}.

For $\alpha\in(0,1)$ or $Q$ is symmetric and $\alpha\in[1,2)$, the remainder of the proof is exactly as in \cite{Rosinski2007} which proves (i). For (ii), first consider the case $\alpha\in(1,2)$. We need to show
\begin{equation}
\label{eq:remainstoshow}
\sum_{j=1}^{\infty}\frac{t}{T}\left(\frac{\alpha j}{||\sigma||T}\right)^{-1/\alpha}x_0-\frac{t}{T}c_j=tb_T,
\end{equation}
where
\begin{equation}
\label{eq:centeringc}
c_j=\int_{j-1}^j\mathbb{E}\left[\left(\left(\frac{\alpha s}{||\sigma||T}\right)^{-1/\alpha}\wedge\frac{E_j^{1/p}U_j^{1/\alpha}}{||V_j||^{1/p}}\right)\frac{V_j}{||V_j||}\right]\upd s
\end{equation}
and $b_T$ as in \eqref{eq:bT}.

Define as in \cite{Rosinski2007}
\begin{equation}
\label{eq:centeringcprime}
c_j'=\int_{j-1}^j\mathbb{E}\left[\left(\frac{\alpha s}{||\sigma||T}\right)^{-1/\alpha}\frac{V_j}{||V_j||}\right]\upd s=\frac{\alpha^{1-1/\alpha}(||\sigma||T)^{1/\alpha}}{\alpha-1}(j^{1-1/\alpha}-(j-1)^{1-1/\alpha})x_0,
\end{equation}
for $j\ge1$.
The procedure now is analogously to \cite{Rosinski2007} by first showing showing absolute summability of $||c_j'-c_j||$ and then computing $\sum_{j=1}^{\infty}(c_j'-c_j)$. We only have to replace $\mathbb{E}[E_1^{(1-\alpha)}]$ by $\mathbb{E}[E_1^{(1-\alpha)/p}]=\Gamma\left(\frac{1+p-\alpha}{p}\right)$ in each step and use the conversion rule \eqref{eq:Rmeasure} for general $p$.

For $\alpha =1$, we again follow \cite{Rosinski2007} and replace $\mathbb{E}[\log(E_1U_1)]$ by $\mathbb{E}[\log(E_1^{1/p}U_1)]=-1-\gamma/p$ in each step. This completes the proof.

\end{proof}

\begin{proof}[Proof of Theorem \ref{thm:trunc}]
Theorem \ref{thm:series} implies that 
\begin{equation}
\label{eq:measdecomp}
M((x,\infty)B)=\int_{\mathbb{R}^d}\int_0^{\infty}\int_0^1\int_0^{\infty}\mathds{1}_{(x,\infty)}\left(\left(\frac{\alpha s}{||\sigma||}\right)^{-1/\alpha}\wedge\frac{r^{1/p}u^{1/\alpha}}{||v||^{1/p}}\right)\mathds{1}_B\left(\frac{v}{||v||}\right)\upd s \upd u \mathrm{e}^{-r}\upd r\widetilde{Q}(\upd v),
\end{equation}
for $x>0$ and $B\in\mathcal{B}(\mathbb{S}^{d-1})$, where $\widetilde{Q}(\upd v):=Q(\upd v)/||\sigma||$.

As in \cite{Imai20114411},
\begin{align}
&\int_0^n \mathds{1}_{(x,\infty)}\left(\left(\frac{\alpha s}{||\sigma||}\right)^{-1/\alpha}\wedge\frac{r^{1/p}u^{1/\alpha}}{||v||^{1/p}}\right)\\
=&\mathds{1}_{(x,\infty)} \left(\frac{r^{1/p}u^{1/\alpha}}{||v||^{1/p}}\right)\mathrm{Leb}\left(\left\{s\in(0,n)\left(\frac{\alpha s}{||\sigma||}\right)^{-1/\alpha}>x\right\}\right)\\
=&\mathds{1}_{(x,\infty)} \left(\frac{r^{1/p}u^{1/\alpha}}{||v||^{1/p}}\right)\left(n\wedge\frac{||\sigma||}{\alpha}x^{-\alpha}\right).
\end{align}
Due to the truncation, $L^{(n)}$ has the triplet $(0,0,M_{n})$, where
\begin{align}
\label{eq:truncmeasdecomp}
M_{n}((x,\infty)B)&=\int_{\mathbb{R}^d}\int_0^{\infty}\int_0^1\int_0^{n}\mathds{1}_{(x,\infty)}\left(\left(\frac{\alpha s}{||\sigma||}\right)^{-1/\alpha}\wedge\frac{r^{1/p}u^{1/\alpha}}{||v||^{1/p}}\right)\mathds{1}_B\left(\frac{v}{||v||}\right)\upd s \upd u \mathrm{e}^{-r}\upd r\widetilde{Q}(\upd v)\\
&=\int_{\mathbb{R}^d}\int_0^{\infty}\int_0^1\mathds{1}_{(x,\infty)} \left(\frac{r^{1/p}u^{1/\alpha}}{||v||^{1/p}}\right)\left(n\wedge\frac{||\sigma||}{\alpha}x^{-\alpha}\right)\mathds{1}_B\left(\frac{v}{||v||}\right)\upd u \mathrm{e}^{-r}\upd r\widetilde{Q}(\upd v)\\
&=\left(n\frac{\alpha}{||\sigma||}x^{\alpha}\wedge1\right)\int_{\mathbb{R}^d}\int_0^{\infty}\int_0^1\mathds{1}_{(x,\infty)}\left(\frac{r^{1/p}u^{1/\alpha}}{||v||^{1/p}}\right)\frac{||\sigma||}{\alpha}x^{-\alpha}\mathds{1}_B\left(\frac{v}{||v||}\right)\upd u \mathrm{e}^{-r}\upd r\widetilde{Q}(\upd v)\\
&= \left(n\frac{\alpha}{||\sigma||}x^{\alpha}\wedge1\right)\int_{\mathbb{R}^d}\int_0^{\infty}\int_0^1\int_0^{\infty}\mathds{1}_{(x,\infty)}\left(\left(\frac{\alpha s}{||\sigma||}\right)^{-1/\alpha}\wedge\frac{r^{1/p}u^{1/\alpha}}{||v||^{1/p}}\right)\mathds{1}_B\left(\frac{v}{||v||}\right)\upd s \upd u \mathrm{e}^{-r}\upd r\widetilde{Q}(\upd v)\\
&= \left(n\frac{\alpha}{||\sigma||}x^{\alpha}\wedge1\right) M((x,\infty)B).
\end{align}
The fourth inequality follows since, analogously to \cite{Imai20114411},
\begin{equation}
\mathds{1}_{(x,\infty)}\left(\frac{r^{1/p}u^{1/\alpha}}{||v||^{1/p}}\right)\frac{||\sigma||}{\alpha}x^{-\alpha}=\int_0^{\infty}\mathds{1}_{(x,\infty)}\left(\left(\frac{\alpha s}{||\sigma||}\right)^{-1/\alpha}\wedge\frac{r^{1/p}u^{1/\alpha}}{||v||^{1/p}}\right)\upd s.
\end{equation}
\end{proof}

\begin{proof}[Proof of Corollary \ref{cor:error}]
For the first,
\begin{align}
\mathbb{E}\left[(Y_t-\widetilde{Y}_t(\kappa,n))^2\right]&\le\mathbb{E}[Q_t(n)^2]+\mathbb{E}[R_t(\kappa,n)^2]\\
&=\mathbb{V}ar[Q_t(n)]+\mathbb{E}[Q_t(n)]^2+\mathbb{V}ar[R_t(\kappa,n)]+\mathbb{E}[R_t(\kappa,n)]^2.
\end{align}
We have by the definitions
\begin{align}
\mathbb{E}[Q_t(n)]&=\int_{\mathbb{R}}z(M-M_n)(\upd z)\int_{-\infty}^Tg(t-s)\upd s=-\int_{\mathbb{R}}z(M-M_n)(\upd z)\sum_{k=1}^{\bar{p}}\frac{\alpha_k}{\lambda_k},\\
\mathbb{E}[R_t(\kappa,n)]&=\int_{\mathbb{R}}M_n(\upd z)\int_{-\infty}^{-\kappa}g(t-s)\upd s=-\int_{\mathbb{R}}M_n(\upd z)\sum_{k=1}^{\bar{p}}\frac{\alpha_k\mathrm{e}^{\lambda_k(\kappa+t)}}{\lambda_k},
\end{align}
and
\begin{align}
\mathbb{V}ar[Q_t(n)]&=\int_{\mathbb{R}}z^2(M-M_n)(\upd z)\int_{-\infty}^Tg(t-s)^2\upd s\le -\sigma_n^2\left(\sum_{k=1}^{\bar{p}}\frac{1}{2\lambda_k}\right)\left(\sum_{k=1}^{\bar{p}}\alpha_k^2\right),\\
\mathbb{V}ar[R_t(\kappa,n)]&=\int_{\mathbb{R}}z^2M_n(\upd z)\int_{-\infty}^{-\kappa}g(t-s)^2\upd s\le -\int_{\mathbb{R}}z^2M_n(\upd z)\left(\sum_{k=1}^{\bar{p}}\frac{\mathrm{e}^{2\lambda_k(\kappa+t)}}{2\lambda_k}\right)\left(\sum_{k=1}^{\bar{p}}\alpha_k^2\right),
\end{align}
where we have used the Cauchy-Schwarz inequality. This implies the first result. With this, the second is straightforward. 
\end{proof}

\section{Appendix: L\'evy measure moments}\label{app:moments}
We collect formulas of the truncated L\'evy measure, i.e.,
\begin{equation}
\label{eq:Levymoment1and2}
\int zM_n(\upd z)\ \ \ \text{and}\ \ \ \int z^2M_n(\upd z)
\end{equation}
for our examples. With these it is computationally straight-forward to derive $\mathbb{E}[Q_t(n)]$, $\mathbb{E}[R_t(\kappa,n)]$, $\mathbb{V}ar[Q_t(n)]$, and $\mathbb{V}ar[R_t(\kappa,n)]$.

\begin{example}[continues=expl:TSS]
With the help of \eqref{eq:truncmeasure}, we obtain for the pTSS
\begin{align}
\label{eq:truncLevymeas1TSS}
\int_0^{\infty} zM_n(\upd z)&= \alpha  n  \left(\left(\frac{\alpha  n}{\delta }\right)^{-1/\alpha }\left((\alpha +1)   \mathrm{E}_{\frac{\alpha +p-1}{p}}\left(\left(\frac{n \alpha }{\delta }\right)^{-p/\alpha } \lambda ^p\right)-\alpha   \mathrm{E}_{\frac{\alpha }{p}+1}\left(\left(\frac{n \alpha }{\delta }\right)^{-p/\alpha } \lambda
   ^p\right)\right)\right.\\
	&\ \ \ + \left. \lambda^{-1}\left(\Gamma \left(\frac{1}{p}\right)-\Gamma \left(\frac{1}{p},\left(\frac{n \alpha }{\delta }\right)^{-p/\alpha } \lambda ^p\right)\right)\right)\left(p(\alpha +1)\right)^{-1} ,\\
\label{eq:truncLevymeas2TSS}
\int_0^{\infty} z^2M_n(\upd z)&=\alpha  n \left(\left(\frac{\alpha  n}{\delta }\right)^{-2/\alpha } \left((\alpha +2) \mathrm{E}_{\frac{p+\alpha -2}{p}}\left(\left(\frac{n \alpha }{\delta }\right)^{-\frac{p}{\alpha }}
   \lambda ^p\right)-\alpha  \mathrm{E}_{\frac{p+\alpha }{p}}\left(\left(\frac{n \alpha }{\delta }\right)^{-\frac{p}{\alpha }} \lambda ^p\right)\right)\right.\\
	&\ \ \ +\left.2\lambda^{-2} \left(\Gamma
   \left(\frac{2}{p}\right)-\Gamma \left(\frac{2}{p},\left(\frac{n \alpha }{\delta }\right)^{-\frac{p}{\alpha }} \lambda ^p\right)\right)\right)\left(p (\alpha +2)\right)^{-1},
\end{align}
where $\mathrm{E}_m(x)=\int_1^{\infty}\mathrm{e}^{-xt}t^{-m}\upd t$ is the exponential integral function and $\Gamma(s,x)=\int_x^{\infty}t^{s-1}\mathrm{e}^{-t}\upd t$ is the upper incomplete gamma function.
\end{example}

\begin{example}[continues=expl:CTS]
\begin{align}
\label{eq:truncLevymeas1CTS}
\int_{-\infty}^{\infty} zM_n(\upd z)&= \delta_+ \left(\left(\frac{\alpha  n}{||\sigma|| }\right)^{-1/\alpha +1}\left((\alpha +1)   \mathrm{E}_{\frac{\alpha +p-1}{p}}\left(\left(\frac{n \alpha }{||\sigma|| }\right)^{-p/\alpha } \lambda_+ ^p\right)-\alpha   \mathrm{E}_{\frac{\alpha }{p}+1}\left(\left(\frac{n \alpha }{||\sigma|| }\right)^{-p/\alpha } \lambda_+
   ^p\right)\right)\right.\\
	&\ \ \ + \left. \lambda_+^{-1}\frac{\alpha  n}{||\sigma|| }\left(\Gamma \left(\frac{1}{p}\right)-\Gamma \left(\frac{1}{p},\left(\frac{n \alpha }{||\sigma||}\right)^{-p/\alpha } \lambda_+ ^p\right)\right)\right)\left(p(\alpha +1)\right)^{-1} ,\\
&\ \ \ +	\delta_- \left(\left(\frac{\alpha  n}{||\sigma|| }\right)^{-1/\alpha +1}\left((\alpha +1)   \mathrm{E}_{\frac{\alpha +p-1}{p}}\left(\left(\frac{n \alpha }{||\sigma|| }\right)^{-p/\alpha } \lambda_- ^p\right)-\alpha   \mathrm{E}_{\frac{\alpha }{p}+1}\left(\left(\frac{n \alpha }{||\sigma|| }\right)^{-p/\alpha } \lambda_-
   ^p\right)\right)\right.\\
&\ \ \ + \left. \lambda_-^{-1}\frac{\alpha  n}{||\sigma|| }\left(\Gamma \left(\frac{1}{p}\right)-\Gamma \left(\frac{1}{p},\left(\frac{n \alpha }{||\sigma||}\right)^{-p/\alpha } \lambda_- ^p\right)\right)\right)\left(p(\alpha +1)\right)^{-1} ,\\	
\label{eq:truncLevymeas2CTS}
\int_{-\infty}^{\infty} z^2M_n(\upd z)&=\delta_+ \left(\left(\frac{\alpha  n}{||\sigma|| }\right)^{-2/\alpha +1} \left((\alpha +2) \mathrm{E}_{\frac{p+\alpha -2}{p}}\left(\left(\frac{n \alpha }{||\sigma||}\right)^{-\frac{p}{\alpha }}
   \lambda_+ ^p\right)-\alpha  \mathrm{E}_{\frac{p+\alpha }{p}}\left(\left(\frac{n \alpha }{||\sigma|| }\right)^{-\frac{p}{\alpha }} \lambda_+ ^p\right)\right)\right.\\
	&\ \ \ +\left.2\lambda_+^{-2} \frac{\alpha  n}{||\sigma|| } \left(\Gamma
   \left(\frac{2}{p}\right)-\Gamma \left(\frac{2}{p},\left(\frac{n \alpha }{||\sigma|| }\right)^{-\frac{p}{\alpha }} \lambda_+ ^p\right)\right)\right)\left(p (\alpha +2)\right)^{-1}\\
&\ \ \ + \delta_- \left(\left(\frac{\alpha  n}{||\sigma|| }\right)^{-2/\alpha +1} \left((\alpha +2) \mathrm{E}_{\frac{p+\alpha -2}{p}}\left(\left(\frac{n \alpha }{||\sigma||}\right)^{-\frac{p}{\alpha }}
   \lambda_- ^p\right)-\alpha  \mathrm{E}_{\frac{p+\alpha }{p}}\left(\left(\frac{n \alpha }{||\sigma|| }\right)^{-\frac{p}{\alpha }} \lambda_- ^p\right)\right)\right.\\
	&\ \ \ +\left.2\lambda_-^{-2} \frac{\alpha  n}{||\sigma|| } \left(\Gamma
   \left(\frac{2}{p}\right)-\Gamma \left(\frac{2}{p},\left(\frac{n \alpha }{||\sigma|| }\right)^{-\frac{p}{\alpha }} \lambda_- ^p\right)\right)\right)\left(p (\alpha +2)\right)^{-1},
\end{align}
where in this case $||\sigma||=\delta_++\delta_-$.
\end{example}

\begin{example}[continues=expl:GTS]
\begin{align}
\label{eq:truncLevymeas1GTS}
\int_0^{\infty} zM_n(\upd z)&=\Bigg( (n\alpha)^{1-\alpha-\beta}\lambda^{\beta/p}\left((\alpha+1)(\alpha+\beta-1)^{-1}\ _2F_1\left(\frac{\beta}{p},\frac{\alpha+\beta-1}{p},\frac{\alpha+\beta+p-1}{p};-(n\alpha)^{-p}\lambda\right)\right.\\
&\ \ \ + \left((n\alpha)^{\alpha+1}-\alpha-1\right)(\alpha+\beta)^{-1}\ _2F_1\left(\frac{\beta}{p},\frac{\alpha+\beta}{p},\frac{\alpha+\beta+p}{p};-(n\alpha)^{-p}\lambda\right)\\
&\ \ \ - \left.(n\alpha)^{\alpha+1}(\beta-1)^{-1}\ _2F_1\left(\frac{\beta-1}{p},\frac{\beta}{p},\frac{\beta+p-1}{p};-(n\alpha)^{-p}\lambda\right)\right)\\
&\ \ \ + n\alpha\lambda^{1/p}\Gamma\left(1+\frac{1}{p}\right)\Gamma\left(\frac{\beta-1}{p}\right)\Gamma\left(\frac{\beta}{p}\right)^{-1}\Bigg)(\alpha+1)^{-1},
\end{align}
if $\alpha+\beta>1$ and $\beta\neq1$, where $ _2F_1(a,b,c;x)=\sum_{j=0}^{\infty}\frac{(a)_j(b_j)}{(c)_j}\frac{x^n}{n!}$ is the hypergeometric function and $(q)_j$ denotes the Pochhammer symbol. We omit the formula for $\beta=1$.
\begin{align}
\label{eq:truncLevymeas2GTS}
\int_0^{\infty} z^2M_n(\upd z)&=\Bigg( (n\alpha)^{2-\alpha-\beta}\lambda^{\beta/p}\left((\alpha+2)(\alpha+\beta-2)^{-1}\ _2F_1\left(\frac{\beta}{p},\frac{\alpha+\beta-2}{p},\frac{\alpha+\beta+p-2}{p};-(n\alpha)^{-p}\lambda\right)\right.\\
&\ \ \ + \left(2(n\alpha)^{\alpha+1}-\alpha-2\right)(\alpha+\beta)^{-1}\ _2F_1\left(\frac{\beta}{p},\frac{\alpha+\beta}{p},\frac{\alpha+\beta+p}{p};-(n\alpha)^{-p}\lambda\right)\\
&\ \ \ - \left.2(n\alpha)^{\alpha+1}(\beta-2)^{-1}\ _2F_1\left(\frac{\beta-2}{p},\frac{\beta}{p},\frac{\beta+p-2}{p};-(n\alpha)^{-p}\lambda\right)\right)\\
&\ \ \ + n\alpha\lambda^{2/p}\Gamma\left(1+\frac{2}{p}\right)\Gamma\left(\frac{\beta-2}{p}\right)\Gamma\left(\frac{\beta}{p}\right)^{-1}\Bigg)(\alpha+2)^{-1},
\end{align}
if $\alpha+\beta>2$ and $\beta\neq2$. We omit the formula for $\beta=2$.
\end{example}
\end{document}